\def \P {{\mathbb P}}
\def\leq{\leqslant}
\def\geq{\geqslant}
\theoremstyle{plain}
\newtheorem{theorem}{Theorem}
\newtheorem{proposition}{Proposition}[section]
\newtheorem{lemma}[proposition]{Lemma}
\theoremstyle{remark}
\numberwithin{equation}{section}
\begin{document}

\vglue -5mm

\title[THE AVERAGE SIZE OF RAMANUJAN SUMS OVER CUBIC NUMBER FIELDS]
{THE AVERAGE SIZE OF RAMANUJAN SUMS OVER CUBIC NUMBER FIELDS}
\author{Jing Ma, Huayan Sun \& Wenguang Zhai}

\address{%
Jing Ma
\\
School of Mathematics
\\
Jilin University
\\
Changchun 130012
\\
P. R. China
}
\email{jma@jlu.edu.cn}

\address{%
Huayan Sun
\\
School of Mathematics
\\
Jilin University
\\
Changchun 130012
\\
P. R. China
}
\email{hysun19@mails.jlu.edu.cn}

\address{%
Wenguang Zhai
\\Department of Mathematics
\\China University of Mining and Technology
\\Beijing 100083
\\P. R. China
}
\email{zhaiwg@hotmail.com}

\date{\today}

\begin{abstract}
Let $\textit{K}$ be a cubic number field.  In this paper, we study the  Ramanujan sums $c_{\mathcal{J}}(\mathcal{I})$, where $\mathcal{I}$ and $\mathcal{J}$ are integral ideals in $\mathcal{O}_\textit{K}$. The asymptotic behaviour of sums of $c_{\mathcal{J}}(\mathcal{I})$ over both
$\mathcal{I}$ and $\mathcal{J}$ is investigated.
\end{abstract}

\keywords{ Ramanujan sum, Cubic field, Exponential sum}

\maketitle   	

\section{Introduction}

\subsection{Ramanujan sums over the rationals}

For positive integers $m$ and $n$ the  Ramanujan sum $c_{m}(n)$ is defined as
\begin{equation}\label{classical}
c_{m}(n)=\sum_{\substack{1\leq j\leq m\\gcd(j,m)=1}}e\Big(\frac{jn}{m}\Big)=\sum_{d|gcd(m,n)}d\mu\Big(\frac{m}{d}\Big),
\end{equation}
where $e(z)=e^{2\pi i z}$ and $\mu(\cdot)$ is the M\"{o}bius function.
In 2012, Chan and Kumchev \cite{Chan-Kumchev} studied the average order of $c_{m}(n)$  with respect to both $m$ and $n$.
They proved that
\begin{equation}\label{S1}
\begin{aligned}
S_{1}(X,Y)&=\sum_{1\leq m\leq X}\sum_{1\leq n\leq Y}c_{m}(n)\\
&=Y-\frac{3}{2\pi ^{2}}X^{2}+O(XY^{1/3}\log X)+O(X^{3}Y^{-1}),
\end{aligned}
\end{equation}
for large real numbers $Y\geq X \geq 3$,
and
\begin{equation}\label{def:S_{1}(X,Y)}
\begin{aligned}
S_{1}(X,Y)
& := \begin{cases}
Y, & \text{if $\delta >2$,}
\\\noalign{\vskip 1mm}
 -\frac{3}{2\pi ^{2}}X^{2}, & \text{if $1< \delta <2$, }
\end{cases}
\end{aligned}
\end{equation}
if $Y\asymp X^{\delta}$.
\subsection{Ramanujan sums in fields}

  Let  $\textit{K}$ be a number field and $\mathcal{O}_{\textit{K}}$ denote  its ring of algebraic integers. For any nonzero integral ideal $\mathcal{I}$  in $\mathcal{O}_{\textit{K}}$, the M\"{o}bius function is defined as follows: $\mu(\mathcal{I})=0$ if there exists a prime ideal $\mathcal{P}$ such that $\mathcal{P}^{2}$ divides $\mathcal{I}$, and $\mu(\mathcal{I})=(-1)^{r}$ if $\mathcal{I}$ is a product of $r$ distinct prime ideals. For any ideal $\mathcal{I}$, the norm of $\mathcal{I}$ is denoted by $\textit{N}(\mathcal{I})$.
For nonzero integral ideals $\mathcal{I}$ and $\mathcal{J}$,
the Ramanujan sum  in fields is defined by
\begin{equation}\label{fields}
c_{\mathcal{J}}(\mathcal{I})=\sum_{\substack{\mathcal{M}\in \mathcal{O}_{\textit{K}}\\\mathcal{M}|\mathcal{I},\mathcal{M}|\mathcal{J}}}\textit{N}(\mathcal{M})\mu\Big(\frac{\mathcal{J}}{\mathcal{M}}\Big),
\end{equation}
which is an analogue of \eqref{classical}.

For each $n\geq 1$, let  $a_{\textit{K}}(n)$  denote the number of integral ideals in $\mathcal{O}_{\textit{K}}$ of norm $ n$.
Then
\begin{equation}\label{dd}
\sum_{n\leq x}a_{\textit{K}}(n)=\rho_{\textit{K}} x +P_{\textit{K}}(x),\qquad P_{\textit{K}}(x)=O(x^{\frac{\mathbf{d}-1}{\mathbf{d}+1}}),
\end{equation}
where $\rho_{\textit{K}}$ is a constant depending only on  the field $\textit{K}$ and $\mathbf{d}$ is the  degree of the  field extension $\textit{K}/\mathbb{Q}$. This is a classical result of Landau  (see \cite{Landau}).

Let $X \geq 3$ and $Y \geq 3$ be two large real numbers. Define
\begin{equation}
S_{\textit{K}}(X,Y):=\sum_{1\leq \textit{N}(\mathcal{J})\leq X}\sum_{1\leq \textit{N}(\mathcal{I})\leq Y}c_{\mathcal{J}}(\mathcal{I}),
\end{equation}
which is an analogue of \eqref{S1}. 

When $\textit{K}$ is a  quadratic number field, some authors studied the asymptotic behaviour of $S_{\textit{K}}(X,Y)$ (see \cite{Nowak}, \cite{Zhai}, \cite{Zhai-2}).
In \cite{Nowak},  Nowak proved
\begin{equation}\label{1.8}
S_{\textit{K}}(X,Y)\sim \rho_{\textit{K}} Y
\end{equation}
 provided that $Y>X^{\delta}$ for some $\delta>\frac{1973}{820}$.
In \cite{Zhai}, Zhai improved Nowak' results and  proved that \eqref{1.8} holds provided that $Y>X^{\delta}$  for some $\delta>\frac{79}{34}$.
Recently Zhai \cite{Zhai-2} proved that \eqref{1.8} holds for $Y>X^{2+\varepsilon}$.

In this paper, we consider the asymptotic behaviour of $S_{\textit{K}}(X,Y)$ for   a cubic field $\textit{K}$.  We shall prove the following
results.

\begin{theorem}\label{Thm 1}
Let  $\textit{K}$ be a  cubic number field. Suppose that $Y\geq X\geq 3$ are  large real numbers.
 Then
\begin{equation}
S_{\textit{K}}(X,Y)=\rho_{\textit{K}} Y+O(X^{\frac{8}{5}}Y^{\frac{2}{5}+\varepsilon}+X^{\frac{11}{8}}Y^{\frac{1}{2}+\varepsilon})
\end{equation}
provided that $Y>X^{11/4}$.
\end{theorem}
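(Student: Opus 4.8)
The plan is to start from the elementary identity \eqref{fields} and interchange the order of summation. Writing $\mathcal{I} = \mathcal{M}\mathcal{A}$ and $\mathcal{J} = \mathcal{M}\mathcal{B}$, one gets
\begin{equation*}
S_{\textit{K}}(X,Y) = \sum_{N(\mathcal{M})\le X} N(\mathcal{M}) \sum_{N(\mathcal{A})\le Y/N(\mathcal{M})} 1 \cdot \sum_{N(\mathcal{B})\le X/N(\mathcal{M})} \mu(\mathcal{B}).
\end{equation*}
The inner sum over $\mathcal{A}$ is a count of integral ideals, so by \eqref{dd} it equals $\rho_{\textit{K}} Y/N(\mathcal{M}) + P_{\textit{K}}(Y/N(\mathcal{M}))$ with $P_{\textit{K}}(t) \ll t^{1/2}$ for a cubic field (here $\mathbf{d}=3$). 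Substituting the main term $\rho_{\textit{K}} Y/N(\mathcal{M})$ and summing against $N(\mathcal{M}) \cdot \sum_{N(\mathcal{B})\le X/N(\mathcal{M})}\mu(\mathcal{B})$, the standard analysis (using that $\sum_{\mathcal M}\frac{1}{N(\mathcal M)}\sum_{N(\mathcal B)\le X/N(\mathcal M)}\mu(\mathcal B)$ telescopes, essentially because $\sum_{\mathcal{M}\mathcal{B}=\mathcal{N}}\mu(\mathcal{B})$ is supported on $\mathcal{N}=\mathcal{O}_{\textit{K}}$) produces the main term $\rho_{\textit{K}} Y$ plus a contribution that is $O(X\log X)$ or so, which is absorbed into the stated error since $Y > X^{11/4}$. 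So the real content is bounding the two error pieces.

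The first error piece comes from the Landau remainder: $\sum_{N(\mathcal{M})\le X} N(\mathcal{M}) |P_{\textit{K}}(Y/N(\mathcal{M}))| \sum_{N(\mathcal{B})\le X/N(\mathcal{M})} 1$. Using $P_{\textit{K}}(t)\ll t^{1/2}$ trivially gives $\sum_{N(\mathcal M)\le X} N(\mathcal M)\cdot (Y/N(\mathcal M))^{1/2}\cdot (X/N(\mathcal M))$, and summing $N(\mathcal M)^{-1/2}$ over ideals of norm $\le X$ (which is $\ll X^{1/2}$) yields $\ll X^{3/2}Y^{1/2}$ — not good enough. The key improvement, and what I expect to be the main obstacle, is to avoid bounding $P_{\textit{K}}$ trivially and instead exploit cancellation in sums of $P_{\textit{K}}$ weighted by $N(\mathcal{M})$ and by the Möbius sum over $\mathcal{B}$. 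Concretely, after opening $a_{\textit{K}}$ one is led to exponential sums (the standard approach to $P_{\textit{K}}(t)$ for cubic $K$ goes through the divisor-type function and Weyl-style estimates), and one wants a bound of the shape $\sum_{M\le N(\mathcal{M})\le 2M} P_{\textit{K}}(Y/N(\mathcal M)) \ll$ (something saving over the trivial $M \cdot (Y/M)^{1/2}$). Balancing the resulting exponents is what produces the two terms $X^{8/5}Y^{2/5+\varepsilon}$ and $X^{11/8}Y^{1/2+\varepsilon}$: the first dominates when $Y$ is not too large relative to $X$ and reflects an exponential-sum estimate with a cube-root-type saving, while the second is what a second, complementary estimate gives.

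The second error piece comes from inserting the main term $\rho_{\textit{K}} Y/N(\mathcal{M})$ but keeping track of the tail where $N(\mathcal{M})$ is close to $X$ and the ideal-counting/Möbius sums are short; this is handled by elementary bounds (partial summation, $\sum_{N(\mathcal M)\le t} 1 \ll t$, $\sum_{N(\mathcal{B})\le u}\mu(\mathcal B)\ll u$) and contributes at most $O(X\log X + X^{3/2}Y^{-1/2}\cdot\text{(lower order)})$, which under $Y > X^{11/4}$ is dominated by $X^{11/8}Y^{1/2+\varepsilon}$. So the proof structure is: (i) combinatorial rearrangement and extraction of the main term $\rho_{\textit{K}} Y$; (ii) trivial treatment of the Möbius/tail error; (iii) the hard analytic step — estimating the averaged Landau remainder $\sum_{N(\mathcal M)} N(\mathcal M)\,P_{\textit{K}}(Y/N(\mathcal M))\sum_{\mathcal B}\mu(\mathcal B)$ via exponential sums, dyadically in $N(\mathcal{M})$, and optimizing. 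I would expect step (iii) to occupy the bulk of the paper and to require the explicit cubic-field exponential sum machinery (analogous to van der Corport / Weyl bounds used in the divisor problem) to get the exponents $8/5$, $2/5$, $11/8$, $1/2$; everything else is bookkeeping that works as long as $Y$ exceeds $X^{11/4}$.
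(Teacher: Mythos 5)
Your reduction is correct and in fact coincides with the paper's own starting point: interchanging summation in \eqref{fields} gives exactly $S_{\textit{K}}(X,Y)=\rho_{\textit{K}}Y+\sum_{N(\mathcal{M}\mathcal{B})\le X}N(\mathcal{M})\mu(\mathcal{B})P_{\textit{K}}(Y/N(\mathcal{M}))$ (the main-term computation is exact, so the provisional ``$O(X\log X)$'' you allow for is not even needed), and you correctly observe that the trivial bound $P_{\textit{K}}(t)\ll t^{1/2}$ only yields $X^{3/2}Y^{1/2}$, which is insufficient. But everything after that is a plan rather than a proof: you assert that one ``wants'' a bound saving over the trivial estimate and that ``balancing the resulting exponents'' produces $X^{8/5}Y^{2/5+\varepsilon}$ and $X^{11/8}Y^{1/2+\varepsilon}$, without producing either the exponential-sum estimates or the balancing. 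Since the entire content of the theorem is precisely these exponents, this is a genuine gap, not bookkeeping.

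Concretely, the missing ingredients are: (a) the truncated Voronoi-type formula for $P_{\textit{K}}$ (Lemma \ref{lemma 3}), which replaces $P_{\textit{K}}(Y/m)$ by $\frac{(Y/m)^{1/3}}{\sqrt{3}\,\pi}\sum_{n\le y}a_{\textit{K}}(n)n^{-2/3}\cos(6\pi(nY/m)^{1/3})$ plus an error governed by the truncation length $y$; (b) the factorization $a_{\textit{K}}(m)=\sum_{m_1m_2=m}b(m_2)$ coming from $\zeta_{\textit{K}}=\zeta\cdot L$ (Lemma \ref{lemma 1} and \eqref{a-1b}), which splits the averaging variable $m$ into two independent variables $m_1,m_2$ --- this is what turns the averaged remainder into a genuinely three-dimensional exponential sum with monomial phase $(nY/(m_1m_2))^{1/3}$; (c) the Robert--Sargos bounds for such trilinear monomial sums (Lemma \ref{lemma 4}), applied twice with the roles of $m_1$ and $m_2$ interchanged and combined by interpolating the resulting minima; and (d) the optimization over $y$ and the dyadic parameters, together with a separate (easy) treatment of the range where the $\mu$-variable is close to $X$ in norm. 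Your sketch gestures instead at ``cubic-field exponential sum machinery'' for $P_{\textit{K}}(t)$ itself, i.e.\ pointwise Weyl-type bounds; but a pointwise bound cannot give the theorem: even M\"uller's $P_{\textit{K}}(t)\ll t^{43/96+\varepsilon}$ inserted pointwise yields an error $\ll X^{149/96}Y^{43/96+\varepsilon}$, which exceeds the main term $\rho_{\textit{K}}Y$ unless $Y\gg X^{149/53}$, and $149/53>11/4$. The saving must come from cancellation in the $m$-average, which is exactly what the three-variable structure in (b) and (c) exploits. As it stands, your proposal establishes only the identity and the insufficiency of the trivial bound.
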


\begin{theorem}\label{Thm 2}
Let $\textit{K}$ be a  cubic number field.  Suppose that $T\geq X\geq 3$ are two large real numbers such that $T\geq 10X$.  Then we have
\begin{equation*}
\int_{T}^{2T}|\mathfrak{R}_{\textit{K}}(X,Y)|^{2}dY=
c(X)\int_{T}^{2T}Y^{\frac{2}{3}}dY+
O(X^{\frac{31}{9}}T^{\frac{14}{9}+\varepsilon}+X^{\frac{26}{9}}T^{\frac{29}{18}+\varepsilon}),
\end{equation*}
where $$
\mathfrak{R}_{\textit{K}}(X,Y):=S_{\textit{K}}(X,Y)-\rho_{\textit{K}} Y
$$ and  $c(X)$ is defined by \eqref{cx}.
\end{theorem}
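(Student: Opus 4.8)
The plan is to reduce $\mathfrak{R}_{\textit{K}}(X,Y)$ to a short sum of rescaled copies of the error term $P_{\textit{K}}$, expand those by a Voronoi-type formula, and then run the classical diagonal/off-diagonal dissection (in the style of Atkinson's mean-square method) on the resulting integral. \emph{Step 1: an exact formula for $\mathfrak{R}_{\textit{K}}$.} Writing $\mathcal{J}=\mathcal{M}\mathcal{N}$ in \eqref{fields}, summing over $\textit{N}(\mathcal{I})\leq Y$ and using $\#\{\mathcal{I}:\mathcal{M}\mid\mathcal{I},\ \textit{N}(\mathcal{I})\leq Y\}=\sum_{n\leq Y/\textit{N}(\mathcal{M})}a_{\textit{K}}(n)=\rho_{\textit{K}}Y/\textit{N}(\mathcal{M})+P_{\textit{K}}(Y/\textit{N}(\mathcal{M}))$ from \eqref{dd}, one gets
\[
S_{\textit{K}}(X,Y)=\rho_{\textit{K}}Y\!\!\sum_{\textit{N}(\mathcal{M}\mathcal{N})\leq X}\!\!\mu(\mathcal{N})\ +\!\!\sum_{\textit{N}(\mathcal{M}\mathcal{N})\leq X}\!\!\mu(\mathcal{N})\,\textit{N}(\mathcal{M})\,P_{\textit{K}}\!\Big(\frac{Y}{\textit{N}(\mathcal{M})}\Big).
\]
Grouping the pairs $(\mathcal{M},\mathcal{N})$ by $\mathcal{A}:=\mathcal{M}\mathcal{N}$ and using $\sum_{\mathcal{N}\mid\mathcal{A}}\mu(\mathcal{N})=\mathbf{1}_{\{\mathcal{A}=\mathcal{O}_{\textit{K}}\}}$, the first sum is $1$, the term $\rho_{\textit{K}}Y$ cancels exactly, and collecting ideals of a common norm $m$ yields
\begin{equation}\label{eq:unfold}
\mathfrak{R}_{\textit{K}}(X,Y)=\sum_{m\leq X}h_X(m)\,P_{\textit{K}}\!\Big(\frac{Y}{m}\Big),\qquad h_X(m):=m\,a_{\textit{K}}(m)\,M_{\textit{K}}\!\Big(\frac{X}{m}\Big),
\end{equation}
where $M_{\textit{K}}(t):=\sum_{\textit{N}(\mathcal{N})\leq t}\mu(\mathcal{N})$; here $|h_X(m)|\ll Xm^{\varepsilon}$, and $h_X(m)=m\,a_{\textit{K}}(m)$ once $X/2<m\leq X$. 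Since $T\geq 10X$, every argument $Y/m$ (for $m\leq X$, $Y\in[T,2T]$) exceeds $T/X\geq 10$, so each $P_{\textit{K}}(Y/m)$ lies in the range covered by the Voronoi expansion.

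\emph{Step 2: Voronoi expansion and a first splitting.} As $\zeta_{\textit{K}}(s)$ is, up to the factor $\zeta(s)$, an $L$-function of degree $\mathbf{d}=3$, it has a $\mathrm{GL}(3)$-type functional equation, which gives the standard truncated formula: for $v\geq 2$ and any $N\geq 1$,
\[
P_{\textit{K}}(v)=C_{\textit{K}}\,v^{1/3}\sum_{n\leq N}\frac{a_{\textit{K}}(n)}{n^{2/3}}\cos\!\big(6\pi(nv)^{1/3}+\vartheta_{\textit{K}}\big)+O\!\big(v^{1/2+\varepsilon}N^{-1/6}+v^{\varepsilon}\big),
\]
for suitable constants $C_{\textit{K}},\vartheta_{\textit{K}}$. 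Inserting this with $v=Y/m$ into \eqref{eq:unfold} gives $\mathfrak{R}_{\textit{K}}=\mathfrak{M}_N+\mathfrak{E}_N$, where
\[
\mathfrak{M}_N(X,Y)=C_{\textit{K}}\,Y^{1/3}\sum_{m\leq X}\sum_{n\leq N}\frac{h_X(m)\,a_{\textit{K}}(n)}{m^{1/3}n^{2/3}}\cos\!\Big(6\pi\Big(\frac{nY}{m}\Big)^{1/3}+\vartheta_{\textit{K}}\Big)
\]
and $\mathfrak{E}_N$ collects the Voronoi errors. By Cauchy--Schwarz,
\[
\int_T^{2T}|\mathfrak{R}_{\textit{K}}|^2\,dY=\int_T^{2T}|\mathfrak{M}_N|^2\,dY+O\!\Big(\Big(\!\int_T^{2T}\!|\mathfrak{M}_N|^2\Big)^{\!1/2}\Big(\!\int_T^{2T}\!|\mathfrak{E}_N|^2\Big)^{\!1/2}+\int_T^{2T}\!|\mathfrak{E}_N|^2\Big),
\]
and $\int_T^{2T}|\mathfrak{E}_N|^2\,dY$ is bounded termwise: each $P_{\textit{K}}(Y/m)$ contributes $\ll(T/m)^{1/2+\varepsilon}N^{-1/6}+(T/m)^{\varepsilon}$, so after summing over $m\leq X$ and integrating one obtains a bound $\ll X^{3}T^{2+\varepsilon}N^{-1/3}+X^{4}T^{1+\varepsilon}$, decreasing in $N$.

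\emph{Step 3: diagonal main term, off-diagonal, and choice of $N$.} Squaring $\mathfrak{M}_N$ and using $2\cos\alpha\cos\beta=\cos(\alpha-\beta)+\cos(\alpha+\beta)$ produces exponential integrals $\int_T^{2T}Y^{2/3}\cos\!\big(6\pi((n_1Y/m_1)^{1/3}\pm(n_2Y/m_2)^{1/3})+\vartheta'\big)\,dY$. The only non-oscillating contribution comes from the ``$-$'' pairs with $(n_1/m_1)^{1/3}=(n_2/m_2)^{1/3}$, i.e.\ from the diagonal $\mathcal{D}=\{n_1m_2=n_2m_1\}$, where the phase vanishes identically; this gives the main term
\[
\frac{C_{\textit{K}}^{2}}{2}\Big(\sum_{\substack{m_1,m_2\leq X,\ n_1,n_2\leq N\\ n_1m_2=n_2m_1}}\frac{h_X(m_1)h_X(m_2)a_{\textit{K}}(n_1)a_{\textit{K}}(n_2)}{(m_1m_2)^{1/3}(n_1n_2)^{2/3}}\Big)\int_T^{2T}Y^{2/3}\,dY.
\]
I would then show the bracketed sum converges as $N\to\infty$ to the constant $c(X)$ of \eqref{cx}: for fixed $m_1,m_2$ the diagonal condition forces $(n_1,n_2)=((m_1/g)t,(m_2/g)t)$ with $g=\gcd(m_1,m_2)$ and $t\geq 1$, and the $t$-series converges by the divisor bound $a_{\textit{K}}(n)\ll n^{\varepsilon}$; the tail $\max(n_1,n_2)>N$ contributes $\ll X^{C}N^{-1/3}T^{5/3}$ for an absolute $C$ (another $N$-decreasing error), and matching the surviving constant with \eqref{cx} is bookkeeping. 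For every remaining pair (all ``$+$'' pairs and the ``$-$'' pairs with $n_1m_2\neq n_2m_1$) the phase $\phi(Y)=6\pi Y^{1/3}\lambda$ satisfies $\phi'(Y)\asymp Y^{-2/3}|\lambda|$ and $\phi''(Y)\asymp Y^{-5/3}|\lambda|$, so the first-derivative test gives $\ll T^{4/3}|\lambda|^{-1}$ and the second-derivative test $\ll T^{3/2}|\lambda|^{-1/2}$, where $|\lambda|\gg X^{-1/3}$ in the ``$+$'' case and $|\lambda|\gg|n_1m_2-n_2m_1|(m_1m_2)^{-1/3}(n_1m_2+n_2m_1)^{-2/3}$ in the ``$-$'' case. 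Summing these over $m_1,m_2\leq X$ and $n_1,n_2\leq N$ — for the ``$-$'' case grouping by $h=|n_1m_2-n_2m_1|\geq 1$ (with $\ll(NX)^{1+\varepsilon}$ admissible quadruples per $h$, by a divisor argument) and using $\sum_{n\leq N}a_{\textit{K}}(n)^2\ll N^{1+\varepsilon}$ and $|h_X(m)|\ll Xm^{\varepsilon}$ — gives an off-diagonal bound that is a monotone increasing power product in $X,T,N$. Choosing $N$ to be an appropriate small power of $XT$ to balance this against the $N$-decreasing errors of Steps~2 and~3 then yields exactly the stated total error $X^{31/9}T^{14/9+\varepsilon}+X^{26/9}T^{29/18+\varepsilon}$.

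\emph{Main obstacle.} The delicate part is the off-diagonal estimate in Step~3: as in Atkinson's treatment of $\int_2^X|\Delta(x)|^2\,dx$, the hard quadruples are those with $|n_1m_2-n_2m_1|$ small, where the exponential integral barely oscillates, and to reach the exponents of the theorem one must switch carefully between the first- and second-derivative estimates across the various ranges of the $n_i,m_i$, exploit a sharp divisor-type count for $n_1m_2-n_2m_1=h$, and optimise $N$ against several competing error terms simultaneously. The extra length-$X^{1+\varepsilon}$ averaging over $m$, with the poorly controlled weights $h_X(m)$ (for which only $|h_X(m)|\ll Xm^{\varepsilon}$ is used), costs powers of $X$ but introduces no new structural difficulty.
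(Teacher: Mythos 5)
Your Steps 1 and 3 are essentially the paper's argument: the reduction to $\mathfrak{R}_{\textit{K}}(X,Y)=\sum_{m\leq X}m\,a_{\textit{K}}(m)\textit{M}_{\textit{K}}(X/m)P_{\textit{K}}(Y/m)$, the diagonal extraction of $c(X)\int Y^{2/3}dY$, and the off-diagonal first-derivative-test estimate (the paper packages the spacing count as Lemma \ref{lemma 7}) all match. The genuine gap is in Step 2, and it is quantitative, not cosmetic. First, the truncated Voronoi formula you invoke, with pointwise error $O(v^{1/2+\varepsilon}N^{-1/6}+v^{\varepsilon})$ valid for ``any $N\geq1$'', is not an available result for a degree-$3$ coefficient sum: what is known (Friedlander--Iwaniec, the paper's Lemma \ref{lemma 3}) is the error $O(v^{2/3+\varepsilon}N^{-1/3})$, and only in the range $N\ll v$, which for $m\asymp X$ forces $N\ll T/X$. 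Second, and more importantly, \emph{any} termwise/pointwise treatment of the truncation tail is too weak here. Even granting your optimistic bound, you get $\int_T^{2T}|\mathfrak{E}_N|^2\,dY\ll X^{3}T^{2+\varepsilon}N^{-1/3}$; at the largest admissible cutoff $N\asymp T/X$ this is $X^{10/3}T^{5/3+\varepsilon}$, which exceeds the target $X^{31/9}T^{14/9+\varepsilon}$ by the factor $(T/X)^{1/9}$, and pushing $N$ up to $(T/X)^{4/3}$ (were that legitimate) would make the off-diagonal term $\gg X^{29/9}T^{16/9}$, again too large. With the correct pointwise error $v^{2/3+\varepsilon}N^{-1/3}$ the deficit is worse still. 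So no choice of $N$ closes your argument.

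The missing idea is that the Voronoi tail must be estimated in \emph{mean square over $Y$}, not pointwise. This is the paper's Lemma \ref{lemma 3+}: writing $P_{\textit{K}}=P_1+P_2$ with cutoff $y\ll (Y/m)^{1/3}$, one has $\int_T^{2T}|P_2(Y)|^2dY\ll T^{5/3+\varepsilon}y^{-1/3}$, i.e.\ $P_2$ is of size $T^{1/3}y^{-1/6}$ on average --- a full factor $T^{1/3}y^{-1/6}$ better than the square of the pointwise bound. Feeding this through Cauchy's inequality gives $\int_T^{2T}\mathfrak{R}_2^2\,dY\ll X^{10/3}T^{5/3+\varepsilon}y^{-1/3}$, which at the endpoint $y=(T/X)^{1/3}$ produces exactly the term $X^{31/9}T^{14/9+\varepsilon}$ of the theorem (the second error term $X^{26/9}T^{29/18+\varepsilon}$ comes from the cross term). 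That mean-square tail estimate is itself a substantial input: its proof runs through the Tong-type identity of Cao--Tanigawa--Zhai and requires $\int_1^T|\zeta_{\textit{K}}(7/12+it)|^2dt\ll T^{1+\varepsilon}$, which in turn uses the factorization of $\zeta_{\textit{K}}$ from Lemma \ref{lemma 1}, the sixth moment of $\zeta$ at $\sigma=7/12$, and Ivi\'c's third-moment bound for the degree-$2$ automorphic $L$-function in the non-normal case. None of this can be replaced by the ``bounded termwise'' step in your outline, so the proposal as written does not prove the stated error term.
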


\noindent{\bf Remark}.
From  \eqref{c(x)} we can see that $c(X)\ll  X^{\frac{7}{3}+\varepsilon} $. From this estimate we get from Theorem 2 that the asymptotic formula
\eqref{1.8} holds on average provided that $Y>X^{\frac{7}{3}+\varepsilon}$.

\noindent{\bf Notation}.
Let $[x]$ denote the greatest integer  less or equal to $x$.
The notation $U\ll V$ means that there exists  a constant $C>0$ such that  $|U|\leqslant CV$, which is equivalent to $U=O(V)$.  The notations  $U\gg V$ (which implies $U\geqslant 0$ and $V\geqslant 0$), $U\asymp V$ (which means  that we have both $U\ll V$  and $U\gg V$ ) are defined similarly.
Let $\zeta(s)$ denote the Riemann zeta-function and $\tau_{r}(n)$ the number of  ways $n$   factorized into $r$ factors. In particular, $\tau_{2}(n)=\tau (n)$ is the Dirichlet divisor function.  At last, let $z_{n}~(n\geq 1)$ denote a series of  complex numbers.  We set
\begin{equation}
\Big|\sum_{N<n\leq 2N}z_{n}\Big|^{*}:=\max_{N\leq N_{1}<N_{2}\leq 2N}\Big|\sum_{N_{1}<n\leq N_{2}}z_{n}\Big|.
\end{equation}

\section{some lemmas}
In this section, we will make preparation for the proof of our theorems.
 From now on, we always supposed that $\textit{K}$ is a cubic  number field.  The Dedekind zeta-function of $\textit{K}$ is defined by
\begin{equation}
\zeta_{\textit{K}}(s):=\sum_{\substack{\mathcal{I}\in \mathcal{O}_{\textit{K}}\\\mathcal{I}\neq 0}}\frac{1}{\textit{N}^{s}(\mathcal{I})}\quad(\Re s >1).
\end{equation}
Then
\begin{equation}
\zeta_{\textit{K}}(s)=\sum_{n=1}^{\infty}\frac{a_{\textit{K}}(n)}{n^{s}} \quad(\Re s >1),
\end{equation}
where $a_{\textit{K}}(n)$  is  the number of integral ideals in $\mathcal{O}_{\textit{K}}$ of norm $ n$.

The function $\mu_{\textit{K}}(n)$ is defined by
$$
\frac{1}{\zeta_{\textit{K}}(s)}:=\sum_{n=1}^{\infty}\frac{\mu_{\textit{K}}(n)}{n^{s}}   \quad (\Re s>1).
$$
Define $$
\textit{M}_{\textit{K}}(x):=\sum_{n\leq x}\mu_{\textit{K}}(n).
$$
Then there is a trivial bound
\begin{equation}\label{M(x)}
\textit{M}_{\textit{K}}(x)\ll x.
\end{equation}

We collect the algebraic properties of cubic number fields as the following Lemma.
\begin{lemma}\label{lemma 1}
Let $\textit{K}$ be a cubic number field over $\mathbb{Q}$ and $D=df^{2}$ ($d$ squarefree) its discriminant; then
\begin{itemize}
\item[(a)]
$\textit{K}/\mathbb{Q}$ is a normal extension if and only if $D=f^{2}$. In this case $$
\zeta_{\textit{K}}(s)=\zeta(s)L(s,\chi _{1})\overline{L(s,\chi _{1})},
$$
where $\zeta (s)$ is the Riemann zeta-function and $L(s,\chi _{1})$ is an ordinary Dirichlet series (over $\mathbb{Q}$) corresponding to a primitive character $\chi _{1}$ modulo $f$.
\item[(b)]
If $\textit{K}/\mathbb{Q}$ is not  a normal extension, then $d\neq 1$ and $$
\zeta_{\textit{K}}(s)=\zeta(s)L(s,\chi _{2}),
$$
where $L(s,\chi _{2})$ is a Dirichlet $L$-function over the quadratic field $F =\mathbb{Q}(\sqrt{d})$:
$$
L(s,\chi _{2})=\sum_{\varrho}\chi _{2}(\varrho)N_{F}(\varrho)^{-s}, \quad (\Re s  >1).
$$
Here the summation  is taken over all ideals $\varrho\neq 0$ in $F$ and $N_{F}$ denotes the (absolute) ideal norm in $F$.
\end{itemize}
\end{lemma}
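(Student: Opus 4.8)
\emph{Proof strategy.} This lemma is classical: it follows from the Artin formalism for $L$-functions applied to the Galois closure $\widetilde{\textit{K}}$ of $\textit{K}$, combined with class field theory for abelian extensions and the conductor--discriminant formula. The first step is to recall the dichotomy for an irreducible cubic: the Galois group $\widetilde G=\mathrm{Gal}(\widetilde{\textit{K}}/\mathbb{Q})$ is either $C_{3}$ (so that $\widetilde{\textit{K}}=\textit{K}$ and $\textit{K}/\mathbb{Q}$ is normal) or $S_{3}$. To convert this into a condition on the discriminant, I would use that $D_{\textit{K}}$ differs from the discriminant of any monic cubic generating $\textit{K}$ by a nonzero square rational factor, while an irreducible cubic over $\mathbb{Q}$ has Galois group inside $A_{3}$ precisely when its discriminant is a square in $\mathbb{Q}^{\times}$. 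Hence $\textit{K}/\mathbb{Q}$ is normal $\iff D_{\textit{K}}$ is a perfect square $\iff d=1 \iff D=f^{2}$; in particular, if $\textit{K}/\mathbb{Q}$ is not normal then $d\neq1$, which is the first assertion of (b).

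For part (a), write $G=\mathrm{Gal}(\textit{K}/\mathbb{Q})\cong C_{3}$ and factor $\zeta_{\textit{K}}(s)=\prod_{\chi\in\widehat G}L(s,\chi,\textit{K}/\mathbb{Q})=\zeta(s)\,L(s,\chi_{1},\textit{K}/\mathbb{Q})\,L(s,\overline{\chi_{1}},\textit{K}/\mathbb{Q})$, where $\chi_{1}$ is one of the two characters of order $3$ and $\overline{\chi_{1}}=\chi_{1}^{2}$ the other. Since $G$ is abelian, class field theory identifies $L(s,\chi_{1},\textit{K}/\mathbb{Q})$ with a classical Dirichlet $L$-function $L(s,\chi_{1})$ for a primitive Dirichlet character $\chi_{1}$ of some conductor $f'$, and the companion factor is $L(s,\overline{\chi_{1}})$, written $\overline{L(s,\chi_{1})}$ in the statement. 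The conductor--discriminant formula then gives $D_{\textit{K}}=\mathfrak{f}(\chi_{1})\,\mathfrak{f}(\overline{\chi_{1}})$, and since a character and its conjugate have equal conductor this equals $(f')^{2}$; comparing with $D=f^{2}$ (and $d=1$ from the first step) yields $f'=f$, so $\chi_{1}$ is primitive modulo $f$, as required.

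For part (b), let $\widetilde G\cong S_{3}$ and let $H\leqslant\widetilde G$ be the order-$2$ subgroup whose fixed field is $\textit{K}$. Artin induction gives $\zeta_{\textit{K}}(s)=L(s,\mathrm{Ind}_{H}^{\widetilde G}\mathbf{1},\widetilde{\textit{K}}/\mathbb{Q})$, and the permutation representation $\mathrm{Ind}_{H}^{\widetilde G}\mathbf{1}$ of $S_{3}$ decomposes as $\mathbf{1}\oplus\rho$ with $\rho$ the standard $2$-dimensional irreducible representation, so $\zeta_{\textit{K}}(s)=\zeta(s)\,L(s,\rho,\widetilde{\textit{K}}/\mathbb{Q})$. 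The key point is the identity $\rho\cong\mathrm{Ind}_{A_{3}}^{\widetilde G}\psi$ for a nontrivial character $\psi$ of $A_{3}\cong C_{3}$, whose fixed field is the unique quadratic subfield $F=\widetilde{\textit{K}}^{A_{3}}=\mathbb{Q}(\sqrt{D_{\textit{K}}})=\mathbb{Q}(\sqrt{d})$ (here $d\neq1$ by the first step, so $F$ is a genuine quadratic field). By Artin's inductivity, $L(s,\rho,\widetilde{\textit{K}}/\mathbb{Q})=L(s,\psi,\widetilde{\textit{K}}/F)$, and since $\mathrm{Gal}(\widetilde{\textit{K}}/F)\cong C_{3}$ is abelian, class field theory over $F$ realizes this as a ray-class (Hecke) $L$-function $L(s,\chi_{2})=\sum_{\varrho}\chi_{2}(\varrho)N_{F}(\varrho)^{-s}$ for a cubic Hecke character $\chi_{2}$ of $F$, which is the desired factorization $\zeta_{\textit{K}}(s)=\zeta(s)L(s,\chi_{2})$.

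The only genuinely non-formal ingredients are the two appeals to class field theory for abelian extensions --- turning the abstract Artin $L$-functions into honest Dirichlet, resp.\ Hecke, $L$-functions --- together with the conductor bookkeeping in (a); the representation theory of $C_{3}$ and $S_{3}$ used here is entirely elementary. The point demanding the most care is matching normalizations: checking that the character $\chi_{1}$ produced from $G$ is primitive modulo $f$ \emph{itself}, rather than modulo some divisor or multiple of $f$ (which is exactly what the conductor--discriminant identity pins down), and, in (b), that $\chi_{2}$ may be taken as a character on the ideal group of $F$ precisely in the form displayed in the statement.
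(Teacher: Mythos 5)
Your proof sketch is correct. The paper does not prove this lemma at all --- it simply quotes it as Lemma~1 of M\"uller's paper \emph{On the distribution of ideals in cubic number fields} --- so there is no internal argument to compare against; your derivation via the $C_{3}$/$S_{3}$ dichotomy, the decomposition $\mathrm{Ind}_{H}^{S_{3}}\mathbf{1}=\mathbf{1}\oplus\rho$ with $\rho\cong\mathrm{Ind}_{A_{3}}^{S_{3}}\psi$, class field theory, and the conductor--discriminant formula is exactly the standard route by which this classical factorization is established (and is consistent with the paper's Remark~1, which describes $\chi_{2}$ as the cubic class-field character of $F=\mathbb{Q}(\sqrt{d})$ attached to $\textit{K}(\sqrt{d})/F$). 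The one point worth making explicit in a full write-up is the sign bookkeeping in the normality criterion: a normal cubic field is totally real, so $D>0$ and ``$D$ a square'' is equivalent to $d=1$ with your convention $D=df^{2}$, $d$ squarefree.
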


\begin{proof}

This is Lemma 1 in \cite{Muller}.
\end{proof}

\noindent{\bf Remark 1}.
To describe the character $\chi _{2}$, let $H$ be the ideal group in $F$ according to which the normal extension $\textit{K}(\sqrt{d})$  is the class field. Then $H$ divides the set  $A^{f}$ of all ideals $\varrho\subseteq F$ with $(\varrho,f)=1$ into three classes $A^{f}=H\cup C\cup C^{'}$, and ($\omega=e^{2\pi i/3}$)
\begin{equation*}
\begin{aligned}
\chi_{2}(\varrho)&=\begin{cases}
1, \qquad \varrho\in H,
\\\noalign{\vskip 1mm}
\omega,  \qquad \varrho\in C,
\\\noalign{\vskip 1mm}
\overline{\omega},  \qquad \varrho\in C^{'},
\\\noalign{\vskip 1mm}
0, \qquad (\varrho,f) \neq 1.
\end{cases}
\end{aligned}
\end{equation*}
The substitution $\gamma=(\sqrt{d} \mapsto -\sqrt{d})$ in $F$ maps $C$ onto $C^{'}$.

\noindent{\bf Remark 2}.
The factorization of $\zeta_{\textit{K}}(s)$ in Lemma \ref{lemma 1}  gives
\begin{equation}\label{a-1b}
a_{\textit{K}}(n)=\sum_{m|n}b(m),
\end{equation}
where in the case of a normal extension $b(m)=\sum_{xy=m}\chi_{1}(x)\overline{\chi_{1}(y)}$ ($\chi_{1}$ is the primitive character modulo $f$). Otherwise $b(m)$ is equal to the number of ideals $\varrho\in H$ with $N_{F}(\varrho)=m$ minus two times the number of ideals $\varrho\in C$ with $N_{F}(\varrho)=m$. In both cases, $|b(m)|\ll m^{\varepsilon}$.

\begin{lemma}\label{lemma 2}
Let $\textit{K}$ be an algebraic number field of degree $\mathbf{d}$, then
\begin{equation}
a_{\textit{K}}(n)\ll (\tau (n))^{\mathbf{d}-1},
\end{equation}
where $\tau (n)$ is the Dirichlet divisor function and $\mathbf{d}=[K:\mathbb{Q}]$.
\end{lemma}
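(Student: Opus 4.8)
The plan is to exploit the multiplicativity of the arithmetic function $n\mapsto a_{\textit{K}}(n)$ and reduce the estimate to prime powers. First I would recall that, because of the unique factorisation of integral ideals in $\mathcal{O}_{\textit{K}}$ (equivalently, because $\zeta_{\textit{K}}(s)$ has the Euler product $\prod_{\mathcal{P}}(1-\textit{N}(\mathcal{P})^{-s})^{-1}$), the function $a_{\textit{K}}$ is multiplicative, so that $a_{\textit{K}}(n)=\prod_{p}a_{\textit{K}}(p^{k_p})$ when $n=\prod_p p^{k_p}$. Since $\tau$ is multiplicative and $\tau(p^k)=k+1$, it suffices to prove the pointwise bound $a_{\textit{K}}(p^k)\le (k+1)^{\mathbf{d}-1}$ for every prime $p$ and every $k\ge 0$.

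Next I would fix a rational prime $p$ and write its factorisation in $\mathcal{O}_{\textit{K}}$ as $p\mathcal{O}_{\textit{K}}=\mathcal{P}_1^{e_1}\cdots\mathcal{P}_g^{e_g}$, with the $\mathcal{P}_i$ distinct prime ideals of residue degrees $f_i$, so that $\textit{N}(\mathcal{P}_i)=p^{f_i}$. The fundamental identity $\sum_{i=1}^{g}e_if_i=\mathbf{d}$ gives in particular $g\le\mathbf{d}$ and $f_i\ge 1$ for all $i$. Again by unique factorisation, every integral ideal of norm $p^k$ has the shape $\mathcal{P}_1^{a_1}\cdots\mathcal{P}_g^{a_g}$ with nonnegative integers $a_i$, and its norm is $p^{\sum_i a_if_i}$; hence $a_{\textit{K}}(p^k)$ equals the number of $g$-tuples $(a_1,\dots,a_g)$ of nonnegative integers with $\sum_{i=1}^{g}a_if_i=k$.

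To count those tuples I would observe that once $a_1,\dots,a_{g-1}$ have been chosen, the remaining coordinate $a_g$ is uniquely determined, since it must equal $\bigl(k-\sum_{i<g}a_if_i\bigr)/f_g$ when this is a nonnegative integer, and there is no valid $a_g$ otherwise. Because each $f_i\ge 1$, the relation $\sum_i a_if_i=k$ forces $0\le a_i\le k$, so there are at most $(k+1)^{g-1}$ admissible choices of $(a_1,\dots,a_{g-1})$, whence
\[
a_{\textit{K}}(p^k)\le (k+1)^{g-1}\le (k+1)^{\mathbf{d}-1}=\bigl(\tau(p^k)\bigr)^{\mathbf{d}-1},
\]
using $g\le\mathbf{d}$. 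Taking the product over the primes dividing $n$ and invoking multiplicativity of $a_{\textit{K}}$ and of $\tau$ yields $a_{\textit{K}}(n)\le(\tau(n))^{\mathbf{d}-1}$, which is the assertion (in fact with implied constant $1$).

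I do not anticipate a real obstacle here; the only points deserving a word of justification are the multiplicativity of $a_{\textit{K}}$ and the fact that the number $g$ of prime ideals above $p$ never exceeds $\mathbf{d}$, which is exactly what lets one replace the exponent $g-1$ by $\mathbf{d}-1$ uniformly in $p$. If a Dirichlet-series phrasing is preferred, the same content follows by comparing Euler factors: since $(1-p^{-f_is})^{-1}$ is dominated coefficientwise by $(1-p^{-s})^{-f_i}$ and $\sum_i f_i\le\mathbf{d}$, one obtains $a_{\textit{K}}(n)\le\tau_{\mathbf{d}}(n)$, after which the elementary inequality $\tau_{\mathbf{d}}(p^k)=\binom{k+\mathbf{d}-1}{\mathbf{d}-1}=\prod_{j=1}^{\mathbf{d}-1}\frac{k+j}{j}\le(k+1)^{\mathbf{d}-1}$ again gives the result.
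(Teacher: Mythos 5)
Your proof is correct, but it is a genuinely different route from the paper's, for the simple reason that the paper offers no argument at all: it disposes of the lemma by citing formula (68) of Chandrasekharan and Narasimhan \cite{Chan-Nara}. Your reduction to prime powers via the multiplicativity of $a_{\textit{K}}$, the identification of $a_{\textit{K}}(p^{k})$ with the number of solutions of $\sum_{i}a_{i}f_{i}=k$ in nonnegative integers, and the count $\le(k+1)^{g-1}$ with $g\le\mathbf{d}$ (from the fundamental identity $\sum_{i}e_{i}f_{i}=\mathbf{d}$) are all sound, and they give the stated bound with implied constant $1$. Your Euler-factor variant is equally correct: the coefficientwise domination of $(1-p^{-f_{i}s})^{-1}$ by $(1-p^{-s})^{-f_{i}}$ yields the sharper and more commonly quoted intermediate inequality $a_{\textit{K}}(n)\le\tau_{\mathbf{d}}(n)$, and the elementary step
\[
\tau_{\mathbf{d}}(p^{k})=\binom{k+\mathbf{d}-1}{\mathbf{d}-1}=\prod_{j=1}^{\mathbf{d}-1}\frac{k+j}{j}\le(k+1)^{\mathbf{d}-1}
\]
is valid since $k+j\le j(k+1)$ for $j\ge1$. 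What your approach buys is a short, self-contained proof (with an explicit constant and the stronger divisor bound $\tau_{\mathbf{d}}$ as a by-product); what the paper's citation buys is brevity. For the way the lemma is used here, with an unspecified implied constant, either is adequate.
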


\begin{proof}

This is (68) in \cite{Chan-Nara}.
\end{proof}

\noindent{\bf Corollary}.
Let $\textit{K}$ be a cubic field, then we have
\begin{equation}
a_{\textit{K}}(n)\ll \tau ^{2}(n).
\end{equation}

\begin{lemma}\label{lemma 3}
Suppose $1\ll N\ll Y$, then we have
\begin{equation}
P_{\textit{K}}(Y)=\frac{Y^{1/3}}{\sqrt{3}\pi}\sum_{n\leq N}\frac{a_{\textit{K}}(n)}{n^{{2}/{3}}}\cos (6\pi(nY)^{{1}/{3}})+O(Y^{{2}/{3}+\varepsilon}N^{-{1}/{3}}),
\end{equation}
where the $O$-constant depends on $\varepsilon$.
\end{lemma}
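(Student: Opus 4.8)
The plan is to proceed classically: Perron's formula, a shift of contour, the functional equation of $\zeta_{\textit{K}}$, and a saddle-point evaluation of the resulting integrals; the truncation level $N$ is dictated by the height to which Perron's formula is carried.

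First I would write, by the truncated Perron formula (take $Y$ non-integral, $\sigma_0=1+1/\log Y$, and a height $T\ge2$ fixed later),
\[
A(Y):=\sum_{n\le Y}a_K(n)=\frac{1}{2\pi i}\int_{\sigma_0-iT}^{\sigma_0+iT}\zeta_K(s)\,\frac{Y^s}{s}\,ds+O\!\Big(\frac{Y^{1+\varepsilon}}{T}\Big),
\]
the error using $a_K(n)\ll\tau^2(n)\ll n^{\varepsilon}$ (Corollary to Lemma \ref{lemma 2}). Then I would move the contour to $\Re s=-\delta$ with $\delta>0$ small, crossing only the simple pole of $\zeta_K$ at $s=1$ (residue $\rho_K Y$; there is no pole at the origin since $\zeta_K(0)=0$), and estimate the two horizontal segments at height $\pm T$ by the convexity bound for $\zeta_K$ on $\Re s\ge\tfrac12$ and, through the functional equation, for $\Re s<\tfrac12$; for $\delta$ sufficiently small these are absorbed into the final error. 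On the line $\Re s=-\delta$ I would insert $\zeta_K(s)=\Delta_K(s)\,\zeta_K(1-s)$, expand $\zeta_K(1-s)=\sum_{m\ge1}a_K(m)m^{s-1}$ (absolutely convergent there, so the interchange with the finite integral is justified), and obtain
\[
P_K(Y)=\sum_{m\ge1}\frac{a_K(m)}{m}\,I_m+(\text{error}),\qquad
I_m:=\frac{1}{2\pi i}\int_{-\delta-iT}^{-\delta+iT}\Delta_K(s)\,\frac{(Ym)^s}{s}\,ds.
\]

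The crux is the asymptotics of $I_m$. Putting $s=-\delta+it$ and applying Stirling's formula to the quotient of the three gamma factors in $\Delta_K(s)$ (the cubic field has signature $(3,0)$ or $(1,1)$, and both give three such factors), the integrand has modulus of order $(Ym)^{-\delta}|t|^{1/2+3\delta}$ and phase $\psi_m(t)=-3t\log|t|+t\log(Ym)+ct+O(1/|t|)$ with $c$ an explicit constant, whose saddle sits at $|t|=t_0(m)\asymp(Ym)^{1/3}$; the saddle lies in $[-T,T]$ exactly for $m$ up to order $T^3/Y$. Choosing $T\asymp(NY)^{1/3}$ so that this happens precisely for $m\le N$, the stationary-phase method gives, for such $m$,
\[
I_m=\frac{(Ym)^{1/3}}{\sqrt3\,\pi}\cos\!\big(6\pi(Ym)^{1/3}\big)+O\!\big((Ym)^{1/6}\big),
\]
so $\sum_{m\le N}\tfrac{a_K(m)}{m}I_m$ is exactly the asserted main term $\tfrac{Y^{1/3}}{\sqrt3\,\pi}\sum_{m\le N}\tfrac{a_K(m)}{m^{2/3}}\cos(6\pi(mY)^{1/3})$ together with an error $\ll Y^{1/6}N^{1/6+\varepsilon}\ll Y^{2/3+\varepsilon}N^{-1/3}$ (using $N\le Y$). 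For $m>N$ there is no saddle in $[-T,T]$, so the first-derivative test bounds $I_m$; summing against $a_K(m)/m$ gives $\ll Y^{2/3+\varepsilon}N^{-1/3}$ again, and the Perron error $Y^{1+\varepsilon}/T$ is of the same size. Assembling the pieces finishes the proof.

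The main obstacle I anticipate is making the saddle-point estimate of $I_m$ uniform in $m$, especially across the transition range $m\asymp N$, where the saddle of $\psi_m$ lies near the endpoint $\pm T$ and the plain stationary-phase formula degenerates; there one must use the near-endpoint (Fresnel-type) bound $|I_m|\ll(NY)^{1/3}$, exploit that only $\ll N^{1/2}$ integers lie within $N^{1/2}$ of $N$, and control the remainder of that block through $\sum_{N<m\le2N}1/\log(m/N)\ll N^{1+\varepsilon}$. The rest — the precise form of $\Delta_K(s)$ in each signature, the secondary Stirling terms, the horizontal-segment bounds, and the Fubini step — is routine. (Alternatively, the lemma can be quoted from the general average-order theory for Dirichlet series with functional equations involving several gamma factors, applied to $\zeta_K$.)
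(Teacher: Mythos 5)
Your outline is sound, but it is not the paper's route: the paper proves this lemma in one line by citing Proposition 3.2 of Friedlander and Iwaniec, a general truncated Voronoi-type summation formula for coefficients of $L$-functions with a degree-$d$ functional equation, specialized to $\zeta_K$ with $d=3$. What you propose is, in effect, a proof of that cited proposition in this special case — truncated Perron, contour shift past the simple pole at $s=1$ (and you are right that $\zeta_K(0)=0$ for a cubic field in either signature, so $s=0$ contributes nothing), functional equation, and termwise stationary phase with $T\asymp (NY)^{1/3}$ — and the exponents all check out: the stationary-phase secondary terms give $Y^{1/6}N^{1/6+\varepsilon}\ll Y^{2/3+\varepsilon}N^{-1/3}$ precisely because $N\ll Y$, and the Perron error $Y^{1+\varepsilon}/T$ lands on the same bound. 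The two genuinely delicate points, uniformity of the saddle-point evaluation as the saddle approaches the endpoints $\pm T$ and the summation of the first-derivative-test bounds over $m>N$ where $\log(m/N)$ is small, are exactly the ones you flag, and the fixes you name are the standard ones. One detail to watch if you carry this out in full: the functional equation of $\zeta_K$ involves the discriminant of $K$, so an honest execution of your computation produces a cosine argument $6\pi(nY/c_K)^{1/3}$ (plus a constant phase) with $c_K$ depending on the discriminant, rather than the bare $6\pi(nY)^{1/3}$ stated in the lemma; since $K$ is fixed this is only a normalization issue and is harmless for the rest of the paper, but your main term will not literally match the displayed one without fixing that convention. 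Your approach buys self-containedness at the cost of several pages of routine but fiddly asymptotic analysis; the citation buys brevity and uniformity.
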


\begin{proof}
This is a special case of Proposition 3.2 of Friedlander and Iwaniec \cite{Friedlander-Iwaniec}.
\end{proof}

\begin{lemma}\label{lemma 3+}
Let $T\geq 10$ be a large parameter and $y$ a real number such that $T^{\varepsilon}\ll y\ll T$. Define for any $T\leq Y \leq 2T$ that
\begin{equation*}
\begin{split}
&P_{1}(Y)=P_{1}(Y;y):=\frac{Y^{1/3}}{\sqrt{3}\pi}\sum_{n\leq y}\frac{a_{\textit{K}}(n)}{n^{{2}/{3}}}\cos (6\pi(nY)^{{1}/{3}}),\\
&P_{2}(Y)=P_{2}(Y;y):=P_{\textit{K}}(Y)-P_{1}(Y).
\end{split}
\end{equation*}
Then we have
\begin{equation}\label{zzzzzz}
\int_{T}^{2T}|P_{2}(Y)|^{2}dY
\ll
{T^{5/3+\varepsilon}}{y^{-1/3}}\quad(y\ll T^{1/3}).
\end{equation}
\end{lemma}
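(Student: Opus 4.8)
The plan is to reduce, by the truncated Voronoi formula of Lemma~\ref{lemma 3}, to a mean value of an exponential sum with frequencies $n^{1/3}$, isolate a diagonal of exactly the predicted size, and estimate the off‑diagonal. First I would apply Lemma~\ref{lemma 3} with a truncation parameter $N$ (taken as a fixed power of $T$; since $N$ will exceed $Y$ one uses the Friedlander--Iwaniec expansion underlying Lemma~\ref{lemma 3} in the form valid for $N$ up to a power of $Y$, with the same error term) and subtract the definition of $P_1(Y;y)$, obtaining
\begin{equation*}
P_2(Y)=\frac{Y^{1/3}}{\sqrt3\,\pi}\sum_{y<n\le N}\frac{a_{\textit{K}}(n)}{n^{2/3}}\cos\bigl(6\pi(nY)^{1/3}\bigr)+O\bigl(Y^{2/3+\varepsilon}N^{-1/3}\bigr)\qquad(T\le Y\le 2T).
\end{equation*}
Squaring and integrating over $[T,2T]$, the last term contributes $O\bigl(T^{7/3+\varepsilon}N^{-2/3}\bigr)$, which forces $N\gg T y^{1/2}$; the bulk of the work is then the mean square of the main sum.

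For that I would substitute $Y=u^3$, so that $\int_T^{2T}(\cdots)\d Y$ becomes, up to a constant, $\int_{T^{1/3}}^{(2T)^{1/3}}u^4\bigl|\sum_{y<n\le N}\frac{a_{\textit{K}}(n)}{n^{2/3}}\cos(6\pi n^{1/3}u)\bigr|^2\d u$, a mean value over an interval of length $\asymp T^{1/3}$ of an exponential sum with frequencies $6\pi n^{1/3}$ (spacing $\gg n^{-2/3}$) and coefficients $a_{\textit{K}}(n)n^{-2/3}$. Expanding the square, the diagonal $m=n$ yields $\asymp T^{5/3}\sum_{y<n\le N}a_{\textit{K}}(n)^2 n^{-4/3}$; using $\sum_{n\le t}a_{\textit{K}}(n)^2\ll t(\log t)^{c}$ (from the Corollary to Lemma~\ref{lemma 2} together with $\sum_{n\le t}\tau(n)^4\ll t(\log t)^{15}$) and partial summation, this is $\ll T^{5/3}y^{-1/3+\varepsilon}$, exactly the bound asserted; the leftover $\cos^2$‑oscillation is $O(T^{4/3+\varepsilon})$ by the first‑derivative test.

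The off‑diagonal terms $m\ne n$ carry phases $6\pi u(m^{1/3}\pm n^{1/3})$. Grouping $m$ and $n$ dyadically, and using $|m^{1/3}-n^{1/3}|\gg|m-n|\max(m,n)^{-2/3}$ for the difference phases and $m^{1/3}+n^{1/3}\gg\max(m,n)^{1/3}$ for the sum phases, together with the first‑derivative test for the $u$‑integral (equivalently, a Montgomery--Vaughan mean‑value inequality with the above spacing), the blocks with $\max(m,n)\ll T^{1/2}$ contribute only $O(T^{3/2+\varepsilon})$, which is harmless for $y\ll T^{1/3}$. The remaining blocks, with $M:=\max(m,n)\gg T^{1/2}$, are the crux: a trivial treatment there gives only $\ll T^{4/3}N^{1/3+\varepsilon}$, i.e.\ $T^{5/3+\varepsilon}$ when $N\asymp T$, which is just short of $T^{5/3+\varepsilon}y^{-1/3}$. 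To gain the extra factor $y^{1/3}$ — equivalently, to be allowed to take $N$ as large as $Ty^{1/2}$ so that the Voronoi error becomes negligible — one must extract genuine cancellation from $\sum_{n\asymp M}a_{\textit{K}}(n)\,e\bigl(3(2T)^{1/3}n^{1/3}\bigr)$; here I would insert the factorisation $a_{\textit{K}}(n)=\sum_{m\mid n}b(m)$ with $|b(m)|\ll m^{\varepsilon}$ from Remark~2 after Lemma~\ref{lemma 1}, reduce to a bilinear exponential sum, and apply the Kuzmin--Landau inequality and van der Corput's method (in the relevant range $T^{1/2}\ll M\ll Ty^{1/2}$ the phase derivative $\asymp T^{1/3}M^{-2/3}$ is $\ll1$, so one is in the small‑derivative regime). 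Optimising $N$ against the Voronoi error $T^{7/3+\varepsilon}N^{-2/3}$ then completes the estimate.

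The main obstacle is precisely this high‑frequency off‑diagonal bound: the diagonal, the low‑frequency off‑diagonal, and the Voronoi error are all routine once $N$ is fixed, but to get below the barrier $T^{5/3}$ one has to invest in a nontrivial estimate for $\sum_{n\asymp M}a_{\textit{K}}(n)e\bigl(3(2T)^{1/3}n^{1/3}\bigr)$ with $M$ large, which is the only step genuinely using the arithmetic structure of $a_{\textit{K}}$ (through the $b(m)$) and the oscillation of the cube‑root phase.
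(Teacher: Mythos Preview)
Your diagonal computation is correct and you have correctly located the obstruction, but the plan you outline for the off-diagonal does not close the gap. After integrating over $u$ (first-derivative test or, equivalently, Montgomery--Vaughan with spacing $\delta_n\asymp n^{-2/3}$), the off-diagonal together with the Voronoi error gives
\[
\int_T^{2T}|P_2|^2\d Y\ll T^{5/3+\varepsilon}y^{-1/3}+T^{4/3+\varepsilon}N^{1/3}+T^{7/3+\varepsilon}N^{-2/3},
\]
and the two $N$-dependent terms balance only at $N\asymp T$, producing $T^{5/3+\varepsilon}$ --- exactly the barrier you describe. Your proposal to break it by estimating $\sum_{n\asymp M}a_K(n)\,e(cn^{1/3})$ via $a_K=1*b$ and van der Corput does not slot into the argument: once the $u$-integral has been taken the oscillation is spent and only the static weight $|m^{1/3}-n^{1/3}|^{-1}$ remains, so there is no exponential sum left to cancel; and if instead you insert a pointwise bound on the sum before integrating, you throw away the mean-square gain that produced the diagonal term in the first place. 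The constraints $N\ll T/y$ (from the off-diagonal) and $N\gg Ty^{1/2}$ (from the Voronoi error) are genuinely incompatible by this route.

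The paper avoids this obstruction by a complex-analytic input rather than a real-variable one. The crucial step is the moment bound
\[
\int_1^T\bigl|\zeta_K(\tfrac{7}{12}+it)\bigr|^2\d t\ll T^{1+\varepsilon},
\]
obtained from the factorisation of $\zeta_K$ in Lemma~\ref{lemma 1}: in the normal case $\zeta_K=\zeta\,|L(\cdot,\chi_1)|^2$ and one combines sixth moments of $\zeta$ and $L(\cdot,\chi_1)$ on $\sigma=7/12$ by H\"older; in the non-normal case $\zeta_K=\zeta\,L(\cdot,\chi_2)$ with $L(\cdot,\chi_2)$ a degree-$2$ automorphic $L$-function, and one uses its third moment on $\sigma=7/12$. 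With $\sigma^*=7/12$ in hand, the lemma follows from the Tong-type identity framework of Cao--Tanigawa--Zhai \cite{Zhai-Cao} (their Theorem~1 with $d=3$, $M=[T^{2/3}]$, $N=[T^{5-\varepsilon}]$): one decomposes $P_2=R_1^*+\sum_{j=2}^7 R_j$, and the $\zeta_K$-moment is precisely what controls the pieces that the elementary first-derivative argument cannot reach. In other words, the extra cancellation you seek in the long Voronoi tail is encoded in the mean square of $\zeta_K$ at $7/12$, not in a van der Corput bound on $\sum a_K(n)e(cn^{1/3})$.
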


\begin{proof}
We can prove that the estimate
\begin{equation}\label{zeta -K-7/12}
\int_{1}^{T}|\zeta_{\textit{K}}(7/12+it)|^{2}dt\ll T^{1+\varepsilon}
\end{equation}
holds.

If $\textit{K}/\mathbb{Q}$ is a normal extension, then by Lemma \ref{lemma 1} we have $\zeta_{\textit{K}}(s)=\zeta(s)L(s,\chi _{1})\overline{L(s,\chi _{1})}$.
From  Theorem 8.4  in \cite{Ivi-book} we get that
\begin{equation}\label{zeta7/12}
\int_{1}^{T}|\zeta(7/12+it)|^{6}dt\ll T^{1+\varepsilon}.
\end{equation}
The proof of  Theorem 8.4 in \cite{Ivi-book} can be applied directly to $L(s,\chi_{1})$ to derive
\begin{equation}\label{L7/12}
\int_{1}^{T}|L(7/12+it,\chi_{1})|^{6}dt\ll T^{1+\varepsilon}.
\end{equation}
From \eqref{zeta7/12},  \eqref{L7/12} and H\"{o}lder's inequality we get
\begin{equation*}
\begin{split}
&\int_{1}^{T}|\zeta_{\textit{K}}(7/12+it)|^{2}dt\\
&=\int_{1}^{T}|\zeta(7/12+it)|^{2}|L(7/12+it,\chi_{1})|^{4}dt\\
&\ll \Big(\int_{1}^{T}|\zeta(7/12+it)|^{6}dt\Big)^{1/3}\Big(\int_{1}^{T}|L(7/12+it,\chi_{1})|^{6}dt\Big)^{2/3}\\
&\ll T^{1+\varepsilon}.
\end{split}
\end{equation*}

Now suppose that $\textit{K}/\mathbb{Q}$ is not  a normal extension, then $\zeta_{\textit{K}}(s)=\zeta(s)L(s,\chi_{2})$ from Lemma \ref{lemma 1}.  We
know that $L(s,\chi_{2})$ is an  automorphic $L$-function of degree 2 corresponding to a cusp  form $F$
 over $SL_{2}(\mathbb{Z})$ (see, for example, Fomenko \cite{Fomenko}). So from \cite[Lemma 12]{Zhai-Cao}, which is originally proved in   \cite{Ivi},  we have
\begin{equation}\label{Ivi1.6}
\int_{1}^{T}|L(7/12+it,\chi_{2})|^{3}dt\ll T^{1+\varepsilon}.
\end{equation}
By \eqref{zeta7/12}, \eqref{Ivi1.6}  and H\"{o}lder's inequality we get
\begin{equation*}
\begin{split}
&\int_{1}^{T}|\zeta_{\textit{K}}(7/12+it)|^{2}dt\\
&=\int_{1}^{T}|\zeta(7/12+it)|^{2}|L(7/12+it,\chi_{2})|^{2}dt\\
&\ll \Big(\int_{1}^{T}|\zeta(7/12+it)|^{6}dt\Big)^{1/3}\Big(\int_{1}^{T}|L(7/12+it,\chi_{2})|^{3}dt\Big)^{2/3}\\
&\ll T^{1+\varepsilon}.
\end{split}
\end{equation*}

Now we give a short proof  of \eqref{zzzzzz}. For simplicity, we follow the proof of Theorem 1 in \cite{Zhai-Cao}. Take $d=3$, $a(n)=a_{\textit{K}}(n)$, $N=[T^{5-\varepsilon}]$ and $M=[T^{2/3}]$. From \eqref{zeta -K-7/12} we can take $\sigma^{\ast}=7/12$. As in the proof of Theorem 1 in \cite{Zhai-Cao}, we can write
\begin{equation*}
P_{2}(Y)=R_{1}^{\ast}(Y;y)+\sum_{j=2}^{7}R_{j}(Y),
\end{equation*}
where
\begin{equation*}
R_{1}^{\ast}(Y;y):=\frac{Y^{1/3}}{\sqrt{3}\pi}\sum_{y<n\leq M}\frac{d_{3}(n)}{n^{2/3}}\cos (6\pi(nY)^{1/3})
\end{equation*}
and $R_{j}(Y)$ (j = 2, 3, 4, 5, 6, 7) were defined in page 2129 of \cite{Zhai-Cao}. Similar to the formula (8.11) of \cite{Zhai-Cao}, we have the estimate (noting that $y\ll T^{1/3}$)
\begin{equation*}
\begin{split}
&\int_{T}^{2T}(R_{1}^{\ast}(x;y)+R_{2}(x))^{2}dx\\
&\ll \sum_{y<n\leq M}\frac{d_{3}^{2}(n)}{n^{4/3}}\int_{T}^{2T}x^{2/3}dx+T^{5/3+\varepsilon}M^{-1/6}+T^{4/3+\varepsilon}M^{1/3}\\
&\ll T^{5/3+\varepsilon}y^{-1/3}+T^{14/9+\varepsilon}\ll  T^{5/3+\varepsilon}y^{-1/3},
\end{split}
\end{equation*}
which combining (8.17) of \cite{Zhai-Cao} gives \eqref{zzzzzz}.

\end{proof}

Next, we consider the following exponential sums
\begin{equation}
S_{0}=\sum_{H<h\leq 2H}\sum_{N<n\leq 2N}a(h,n)\sum_{{M<m\leq 2M}}b(m)e\Big(U\frac{h^{\beta}n^{\gamma}m^{\alpha}}{H^{\beta}N^{\gamma}M^{\alpha}}\Big)
\end{equation}
and
\begin{equation}
S_{1}=\sum_{H<h\leq 2H}\sum_{N<n\leq 2N}a(h,n)\Big|\sum_{{M<m\leq 2M}}e\Big(U\frac{h^{\beta}n^{\gamma}m^{\alpha}}{H^{\beta}N^{\gamma}M^{\alpha}}\Big)\Big|^{*},
\end{equation}
where $H, N, M$ are positive integers, $U$ is a real number greater than one, $a(h,n)$ and $b(m)$ is a complex number of modulus at most one; moreover, $\alpha, \beta ,\gamma$ are fixed real numbers such that $\alpha(\alpha-1)\beta\gamma\neq 0$.

 Then we have

\begin{lemma}\label{lemma 4}
\begin{equation}
S_{0}\ll(HNM)^{1+\varepsilon}\Big(\Big(\frac{U}{HNM^{2}}\Big)^{1/4}+\frac{1}{(HN)^{1/4}}+\frac{1}{M^{1/2}}+\frac{1}{U^{1/2}}\Big),
\end{equation}
and
\begin{equation}
S_{1}\ll(HNM)^{1+\varepsilon}\Big(\Big(\frac{U}{HNM^{2}}\Big)^{1/4}+\frac{1}{M^{1/2}}+\frac{1}{U}\Big).
\end{equation}
\end{lemma}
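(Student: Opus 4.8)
The plan is to treat $S_0$ and $S_1$ as multilinear exponential sums in three variables and attack them by the classical $B$-process / van der Corput machinery combined with a Weyl–van der Corput shift. The model phase $f(h,n,m)=U\,h^\beta n^\gamma m^\alpha/(H^\beta N^\gamma M^\alpha)$ has the feature that, for $h\asymp H$, $n\asymp N$, $m\asymp M$, all mixed partial derivatives are of size controlled by $U$ times products of the lengths, and the condition $\alpha(\alpha-1)\beta\gamma\neq0$ guarantees that the relevant second derivative in $m$ is genuinely of order $U/M^2$ (so we are in the oscillatory regime whenever $U\gg M^2$, and in the ``small derivative'' regime otherwise). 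First I would dispose of the trivial range: if $U\le (HN)^{1/2}$ or $U\le M$ one simply bounds everything by the total number of terms $(HNM)^{1+\varepsilon}$, which is already absorbed by the $U^{-1/2}$ (resp.\ $U^{-1}$) or $(HN)^{-1/4}$ terms on the right; this is why those ``diagonal'' terms appear in the statement.

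The main body would be an application of the first-derivative test in $m$ followed by a van der Corput $AB$-step (or a single application of the Weyl–van der Corput inequality in the pair $(h,n)$) exactly along the lines of Lemma 2.4 / Theorem 2 of Robert–Sargos or the two-dimensional exponential sum estimates in Fouvry–Iwaniec; indeed this lemma is surely quoted from, or proved verbatim as in, one of those sources. Concretely, for $S_0$ I would first apply Cauchy–Schwarz in the variable $(h,n)$ to remove the coefficients $a(h,n)$, reducing to a sum of the shape $\sum_{h,n}\big|\sum_m b(m) e(f)\big|^2$, open the square to get a new phase $g(h,n,m_1,m_2)$, and then apply the van der Corput $B$-process (Poisson summation) in one of the long variables — taking $m$ if $M$ is large, or $h$ if $H$ is large — to convert the oscillation into a bound involving $\big(U/(HNM^2)\big)^{1/4}$. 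The terms $(HN)^{-1/4}$ and $M^{-1/2}$ come from the diagonal $m_1=m_2$ contribution and from the error terms in Poisson summation respectively. For $S_1$ the argument is the same except one does not have the smooth coefficient $b(m)$ to exploit, so one loses slightly in the error term, which is why $U^{-1}$ replaces $U^{-1/2}$; here one uses the ``first-derivative + second-derivative'' test for the inner sum $\big|\sum_{M<m\le2M} e(f)\big|^*$ directly, getting $\ll M(U/M^2)^{1/4}+ \dots$ uniformly, and then sums trivially over $h,n$.

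The hard part will be bookkeeping the van der Corput exponent pairs cleanly enough that the $\varepsilon$-powers and the four explicit terms come out exactly as stated, and making sure the $B$-process is legitimate — i.e.\ that after the shift the relevant Hessian determinant is bounded below, which is precisely where $\alpha(\alpha-1)\beta\gamma\neq0$ is used (it guarantees $\partial^2_m f\ne0$ and that the Jacobian of the Poisson-dual map is nonvanishing). I would structure the write-up as: (i) reduce to $U\gg (HN)^{1/2}$ and $U\gg M^2$; (ii) Cauchy–Schwarz in $(h,n)$, open the square; (iii) split off the diagonal $m_1=m_2$ contributing $(HNM)^2/(HN)\cdot M^{?}$, which after taking square roots yields the $(HN)^{-1/4}$ term; (iv) for the off-diagonal, apply Poisson/$B$-process in $m$, using the lower bound on the second derivative, picking up the $(U/(HNM^2))^{1/4}$ main term and the $M^{-1/2}$ from boundary terms; (v) reassemble. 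For $S_1$ one replaces steps (ii)–(iv) by a direct appeal to the one-dimensional estimate $\big|\sum_{M<m\le2M} e(f)\big|^* \ll M\big((U/M^2)^{1/4}+M^{-1/2}+ (M^2/U)\big)$ (the standard AB exponent pair $(1/2,1/2)$ bound) and then a trivial summation over $h,n$, with the $U^{-1}$ term arising from the range $M^2\le U\le M^2\cdot(\text{small})$ or equivalently from the second-derivative test's error term.
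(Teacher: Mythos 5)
The paper does not actually prove this lemma: it is quoted verbatim from Robert and Sargos \cite{Robert and Sargos} (their three-dimensional exponential sum theorem, after renaming the variables), and the ``proof'' in the paper is the one-line citation. You correctly guessed the provenance, so on that level there is nothing to compare. The difficulty is that the from-scratch argument you sketch is not the Robert--Sargos argument, and as written it would not produce the stated exponents.

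Two concrete gaps. For $S_{1}$, bounding the inner sum $\bigl|\sum_{M<m\le 2M}e(f)\bigr|^{*}$ by a one-dimensional van der Corput estimate and then summing \emph{trivially} over $h$ and $n$ can never produce the factor $(HN)^{-1/4}$ hidden inside $(U/(HNM^{2}))^{1/4}$: the second-derivative test gives $\sum_{m}e(f)\ll U^{1/2}+MU^{-1/2}$, hence $S_{1}\ll HNM\,(U^{1/2}M^{-1}+U^{-1/2})$, and $U^{1/2}M^{-1}$ exceeds $(U/(HNM^{2}))^{1/4}$ by a factor $(UHN/M^{2})^{1/4}$, which is $\gg 1$ in the ranges where the lemma has content. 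The entire point of the lemma is the extra saving of $(HN)^{1/4}$ coming from cancellation over the pair $(h,n)$, which trivial summation forfeits. For $S_{0}$, the route ``Cauchy--Schwarz in $(h,n)$, open the square, $B$-process/Poisson in one long variable'' yields a main term that depends asymmetrically on whichever single variable is transformed (e.g.\ $U^{1/4}H^{-1/2}$), not the symmetric $U^{1/4}(HNM^{2})^{-1/4}$. What Robert and Sargos actually do is a Weyl--van der Corput shift ($A$-process) in $m$, followed by the double large sieve of Bombieri--Iwaniec, and the decisive input is their solution of the ``first spacing problem'': a sharp count of the quadruples $(m_{1},m_{2},m_{3},m_{4})\in(M,2M]^{4}$ with $|m_{1}^{\alpha}+m_{2}^{\alpha}-m_{3}^{\alpha}-m_{4}^{\alpha}|\le\Delta$. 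That counting theorem is where the hypothesis $\alpha(\alpha-1)\ne 0$ enters (not through a Hessian lower bound for Poisson summation), and it is also the source of the terms $(HN)^{-1/4}$, $M^{-1/2}$ and $U^{-1/2}$ (resp.\ $U^{-1}$). Since this spacing estimate is the heart of the matter and is absent from your plan, the proposal has a genuine gap; the appropriate course here is to do what the paper does and cite the result.
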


\begin{proof}

This is contained in \cite{Robert and Sargos}.
\end{proof}

\begin{lemma}\label{lemma 5}
Suppose that
$$
L(H)=\sum_{i=1}^{m}A_{i}H^{a_{i}}+\sum_{j=1}^{n}B_{j}H^{-b_{j}},
$$
where $A_{i},B_{j},a_{i},$ and $b_{j}$ are positive. Assume that $H_{1}\leq H_{2}$. Then there is some $H$ with $H_{1}\leq H \leq H_{2}$ and
$$
L(H)\ll\sum_{i=1}^{m}\sum_{j=1}^{n}(A_{i}^{b_{j}}B_{j}^{a_{i}})^{{1}/{(a_{i}+b_{j})}}+\sum_{i=1}^{m}A_{i}H_{1}^{a_{i}}+\sum_{j=1}^{n}B_{j}H_{2}^{-b_{j}}.
$$
The implied constants depend only on $m$ and $n$.
\end{lemma}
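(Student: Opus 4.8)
The plan is to write $L=f+g$ as the sum of an increasing part $f(H):=\sum_{i=1}^{m}A_iH^{a_i}$ and a decreasing part $g(H):=\sum_{j=1}^{n}B_jH^{-b_j}$, and to pick $H$ so as to balance the two. Since all $a_i,b_j>0$, the function $f$ is continuous and strictly increasing on $(0,\infty)$ with $f(0^{+})=0$, while $g$ is continuous and strictly decreasing with $g(0^{+})=+\infty$ and $g(+\infty)=0$. Hence $f-g$ is continuous and strictly increasing from $-\infty$ to $+\infty$, so it has a unique zero $H^{\ast}>0$. Because $L=f+g\le 2\max(f,g)$ at every point, it suffices to bound $\max(f,g)$ at a suitably chosen point of $[H_1,H_2]$.

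First I would dispose of the two boundary cases. If $H^{\ast}<H_1$, then $f>g$ on all of $[H_1,H_2]$, so taking $H=H_1$ gives $L(H_1)\le 2f(H_1)=2\sum_{i=1}^{m}A_iH_1^{a_i}$, which is dominated by the second sum on the right-hand side. Symmetrically, if $H^{\ast}>H_2$, then $g>f$ throughout $[H_1,H_2]$, and $H=H_2$ gives $L(H_2)\le 2g(H_2)=2\sum_{j=1}^{n}B_jH_2^{-b_j}$, dominated by the third sum. In both cases the implied constant is absolute.

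The remaining and genuinely substantive case is $H_1\le H^{\ast}\le H_2$; here I take $H=H^{\ast}$, and since $f(H^{\ast})=g(H^{\ast})$ we have $L(H^{\ast})=2f(H^{\ast})$, so everything reduces to proving $f(H^{\ast})\ll_{m,n}\sum_{i=1}^{m}\sum_{j=1}^{n}(A_i^{b_j}B_j^{a_i})^{1/(a_i+b_j)}$. Fix $i$. Since every summand is positive, $A_i(H^{\ast})^{a_i}\le f(H^{\ast})=g(H^{\ast})=\sum_{j=1}^{n}B_j(H^{\ast})^{-b_j}\le n\max_{j}B_j(H^{\ast})^{-b_j}$, so by the pigeonhole principle there is an index $j=j(i)$ with $A_i(H^{\ast})^{a_i}\le nB_j(H^{\ast})^{-b_j}$, i.e. $(H^{\ast})^{a_i+b_j}\le nB_j/A_i$. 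Raising this to the power $a_i/(a_i+b_j)\in(0,1)$ and multiplying by $A_i$ yields $A_i(H^{\ast})^{a_i}\le n^{a_i/(a_i+b_j)}(A_i^{b_j}B_j^{a_i})^{1/(a_i+b_j)}\le n\,(A_i^{b_j}B_j^{a_i})^{1/(a_i+b_j)}$. Summing over $i$ and enlarging the single $j(i)$-term to the full sum over $j$ gives $f(H^{\ast})\le n\sum_{i=1}^{m}\sum_{j=1}^{n}(A_i^{b_j}B_j^{a_i})^{1/(a_i+b_j)}$, and collecting the three cases proves the lemma with an implied constant depending only on $m$ and $n$. The one place that needs an actual idea is this interior case: one must evaluate at the \emph{global} balance point $H^{\ast}$ of $f$ against $g$ and then, via a pigeonhole choice of a dominant $B$-term for each $A$-term together with a weighted arithmetic–geometric step, convert the balance identity $f(H^{\ast})=g(H^{\ast})$ into the pairwise estimates $A_i(H^{\ast})^{a_i}\ll(A_i^{b_j}B_j^{a_i})^{1/(a_i+b_j)}$; the boundary cases are then immediate from the monotonicity of $f$ and $g$.
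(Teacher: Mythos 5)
Your proof is correct and complete: the decomposition into the increasing part $f$ and decreasing part $g$, the location of the balance point $H^{\ast}$, the two boundary cases, and the pigeonhole step turning $f(H^{\ast})=g(H^{\ast})$ into $A_i(H^{\ast})^{a_i}\le n\,(A_i^{b_j}B_j^{a_i})^{1/(a_i+b_j)}$ all check out, with a final constant $2n$ depending only on $n$. The paper does not prove this lemma but simply cites Lemma 2.4 of Graham--Kolesnik, and your argument is essentially the standard one given there (Graham--Kolesnik instead pick $H$ by clipping a minimum of the pairwise balance points $(B_j/A_i)^{1/(a_i+b_j)}$ to $[H_1,H_2]$, which is a cosmetic rather than substantive difference).
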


\begin{proof}

See Lemma 2.4 in \cite{GrahamKolesnik1991}.
\end{proof}

\begin{lemma}\label{lemma 6}
Let $l\geq2$ and $q\geq 1$ be two fixed integers. Then we have
\begin{equation}
\sum_{n\leq x}\tau_{l}^{q}(n)\ll x(\log x)^{l^{q}-1}.
\end{equation}
\end{lemma}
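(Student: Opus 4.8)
The plan is to bound $\tau_l^q$ pointwise by a single ordinary divisor function and then quote the classical estimate for the latter. The key claim is that
\[
\tau_l(n)^q\le\tau_{l^q}(n)\qquad(n\ge1).
\]
Since $\tau_l$ is multiplicative, both $\tau_l^q$ and $\tau_{l^q}$ are non-negative multiplicative functions, so it suffices to verify this on prime powers $n=p^a$. Writing the rising-factorial formulas $\tau_l(p^a)=\binom{a+l-1}{a}=\prod_{i=1}^a\frac{l-1+i}{i}$ and $\tau_{l^q}(p^a)=\prod_{i=1}^a\frac{l^q-1+i}{i}$, the claim follows once we establish the term-by-term inequality
\[
(i+l-1)^q\le i^{\,q-1}\bigl(i+l^q-1\bigr)\qquad(i\ge1).
\]

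To prove this, set $u=l-1\ge1$ and expand both sides by the binomial theorem. One has $(i+u)^q=\sum_{k=0}^q\binom qk i^{\,q-k}u^k$ and $i^{\,q-1}\bigl(i+(u+1)^q-1\bigr)=i^q+i^{\,q-1}\sum_{k=1}^q\binom qk u^k$; the $k=0$ terms agree, so the desired inequality is equivalent to $\sum_{k=1}^q\binom qk u^k\bigl(i^{\,q-k}-i^{\,q-1}\bigr)\le0$, which holds termwise because $i\ge1$ gives $i^{\,q-k}\le i^{\,q-1}$ for every $k\ge1$. This establishes $\tau_l(n)^q\le\tau_{l^q}(n)$.

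It remains to recall that, for a fixed integer $k\ge1$, $\sum_{n\le x}\tau_k(n)\ll_k x(\log x)^{k-1}$; this is classical and follows by induction on $k$ from $\tau_k=\tau_{k-1}\ast1$ and Dirichlet's hyperbola method, $\sum_{n\le x}\tau_k(n)=\sum_{m\le x}\sum_{d\le x/m}\tau_{k-1}(d)\ll\sum_{m\le x}\frac xm(\log x)^{k-2}\ll x(\log x)^{k-1}$, with the convention $(\log x)^0=1$ when $k=2$. Taking $k=l^q$ then yields $\sum_{n\le x}\tau_l^q(n)\le\sum_{n\le x}\tau_{l^q}(n)\ll x(\log x)^{l^q-1}$, as claimed; the implied constant is allowed to depend on the fixed integers $l$ and $q$. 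I do not anticipate any real difficulty here: the one nontrivial ingredient is the elementary prime-power inequality, which the binomial expansion disposes of cleanly.
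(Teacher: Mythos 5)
Your proof is correct. The paper itself gives no argument here: it simply cites Lemma 2.4 of the reference \cite{Zhai}, where the estimate is presumably obtained by one of the standard routes (e.g.\ the general mean-value bound for non-negative multiplicative functions $f$ with $f(p)=\kappa$, which gives $\sum_{n\le x}f(n)\ll x(\log x)^{\kappa-1}$ with $\kappa=\tau_l(p)^q=l^q$). Your route is a clean, fully self-contained alternative: the pointwise domination $\tau_l(n)^q\le\tau_{l^q}(n)$, reduced by multiplicativity to the prime-power inequality $(i+l-1)^q\le i^{\,q-1}(i+l^q-1)$, which your binomial expansion verifies correctly (the $k\ge1$ terms satisfy $i^{\,q-k}\le i^{\,q-1}$ since $i\ge1$), followed by the classical $\sum_{n\le x}\tau_k(n)\ll x(\log x)^{k-1}$. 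One cosmetic remark: the identity $\sum_{n\le x}\tau_k(n)=\sum_{m\le x}\sum_{d\le x/m}\tau_{k-1}(d)$ is just direct summation of the convolution $\tau_k=\tau_{k-1}\ast 1$, not the hyperbola method, but the resulting induction is valid. What your approach buys is independence from the external reference and an explicit, elementary chain of inequalities; what it costs is nothing, since the implied constant is in any case allowed to depend on $l$ and $q$.
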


\begin{proof}

See Lemma 2.4 in \cite{Zhai}.
\end{proof}

\begin{lemma}\label{lemma 7}
Let $T \geq 2$ be a real number. Then we have
\begin{equation}
\sum_{\substack{m,n\leq T\\ m\neq n}}\frac{\tau_{4}^{2}(m)\tau_{4}^{2}(n)}{(mn)^{\frac{2}{3}}|\sqrt[3]{m}-\sqrt[3]{n}|}\ll T^{\frac{1}{3}+\varepsilon}.
\end{equation}
\end{lemma}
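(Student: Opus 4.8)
The plan is to dissect the double sum dyadically and, on each dyadic block, separate the ``diagonal-ish'' contribution from pairs with $m\asymp n$ from the contribution of pairs of very different size. First I would split the range according to $\min(m,n)\asymp M$, $\max(m,n)\asymp N$ with $M\le N$, $M,N$ running over powers of $2$ up to $T$. On such a block, using $\tau_4(k)\ll k^{\varepsilon}$ and the Cauchy–Schwarz / Rankin trick (or Lemma \ref{lemma 6} with $l=4$) one controls the moments $\sum_{n\asymp N}\tau_4^2(n)\ll N^{1+\varepsilon}$, so the arithmetic weights cost only $T^{\varepsilon}$ and the real work is to bound
$$
\Sigma(M,N):=\sum_{\substack{m\asymp M,\ n\asymp N\\ m\neq n}}\frac{1}{(mn)^{2/3}\,\big|\sqrt[3]{m}-\sqrt[3]{n}\big|}.
$$

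The key mechanism is the lower bound for the gap $|\sqrt[3]{m}-\sqrt[3]{n}|$. Writing $m^{1/3}-n^{1/3}=\dfrac{m-n}{m^{2/3}+m^{1/3}n^{1/3}+n^{2/3}}$, we get $|\sqrt[3]{m}-\sqrt[3]{n}|\gg |m-n|\,N^{-2/3}$ when $m,n\lesssim N$. Hence on a block with $\max\asymp N$,
$$
\Sigma(M,N)\ll N^{2/3}\sum_{\substack{m\asymp M,\ n\asymp N\\ m\neq n}}\frac{1}{(mn)^{2/3}\,|m-n|}.
$$
Now fix $n$ and sum over $m$: the inner sum $\sum_{m\asymp M,\ m\neq n}|m-n|^{-1}$ is $\ll \log(2N)\ll N^{\varepsilon}$ (it is a piece of a harmonic sum, and even when the block $m\asymp M$ overlaps $n$ the nearest-integer terms contribute $O(N^{\varepsilon})$). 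Therefore
$$
\Sigma(M,N)\ll N^{2/3+\varepsilon}\sum_{n\asymp N}\frac{1}{n^{2/3}}\sum_{m\asymp M}\frac{1}{m^{2/3}}\ll N^{2/3+\varepsilon}\cdot N^{1/3}\cdot M^{1/3}=N^{1+\varepsilon}M^{1/3}.
$$
Since $M\le N\le T$ this is $\ll N^{4/3+\varepsilon}\ll T^{4/3+\varepsilon}$ — which is \emph{worse} than the claimed $T^{1/3+\varepsilon}$, so a cruder diagonal estimate is not enough; this is where care is needed.

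The remedy — and the main obstacle — is that one must \emph{not} pull out $N^{2/3}$ globally but instead exploit that $|\sqrt[3]{m}-\sqrt[3]{n}|\gg (mn)^{-1/3}\cdot\max(m,n)^{1/3}$ only for far-apart pairs, while for $m$ close to $n$ one has $(mn)^{2/3}\asymp n^{4/3}$ and $|\sqrt[3]m-\sqrt[3]n|\gg n^{-2/3}$, giving each near-diagonal term size $\ll n^{-2/3}$; summing over the $O(N^{\varepsilon})$ nearby $m$ for each $n\asymp N$ and then over $n$ yields $\sum_{n\le T} n^{-2/3}\cdot N^{\varepsilon}\ll T^{1/3+\varepsilon}$ from the near-diagonal. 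For the off-diagonal pairs with $|m-n|\asymp\Delta\ge 1$ one uses $|\sqrt[3]m-\sqrt[3]n|\asymp \Delta\,\max(m,n)^{-2/3}$ and sums
$$
\sum_{\Delta\ge 1}\ \sum_{\substack{m,n\le T\\ |m-n|\asymp\Delta}}\frac{\max(m,n)^{2/3}}{(mn)^{2/3}\,\Delta}\ll\sum_{\Delta\ge 1}\frac{1}{\Delta}\sum_{n\le T}\frac{1}{n^{2/3}}\cdot T^{\varepsilon}
$$
— wait, this still loses. The correct bookkeeping is: write $|\sqrt[3]m-\sqrt[3]n|^{-1}\ll (mn)^{1/3}|m-n|^{-1}\min(m,n)^{-1/3}\cdot$(something $\le 1$); more cleanly, use $|\sqrt[3]m-\sqrt[3]n|\gg |m-n|\min(m,n)^{-2/3}$ is false — the right inequality is $|\sqrt[3]m-\sqrt[3]n|\gg |m-n|\max(m,n)^{-2/3}$, so $(mn)^{-2/3}|\sqrt[3]m-\sqrt[3]n|^{-1}\ll (mn)^{-2/3}\max(m,n)^{2/3}|m-n|^{-1}=\min(m,n)^{-2/3}|m-n|^{-1}$. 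Summing the right side over $m\neq n$, $m,n\le T$: for each value of $\ell=\min(m,n)$ we have $\sum_{|m-n|\ge 1}|m-n|^{-1}\ll\log T$, so the total is $\ll \log^2 T\cdot\sum_{\ell\le T}\ell^{-2/3}\ll T^{1/3+\varepsilon}$, which is exactly the claim. Thus the proof reduces to (i) the elementary gap inequality $|\sqrt[3]m-\sqrt[3]n|\gg |m-n|\max(m,n)^{-2/3}$, (ii) replacing $\tau_4^2$ by $T^{\varepsilon}$ via Lemma \ref{lemma 6} after a dyadic decomposition, and (iii) the double harmonic sum $\sum_{\ell\le T}\ell^{-2/3}\sum_{k\ge 1}k^{-1}\ll T^{1/3+\varepsilon}$. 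The only delicate point is making the $\tau_4^2$-weights harmless uniformly in the gap size; this is handled by noting $\tau_4^2(m)\tau_4^2(n)\ll (mn)^{\varepsilon}\le T^{\varepsilon}$ pointwise, so no averaging of the divisor function against the singular kernel is actually required.
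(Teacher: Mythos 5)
Your final argument is correct, and it is genuinely different from (and cleaner than) the paper's proof, though your write-up should be stripped of the false starts before it could stand as a proof. What you ultimately use is the single uniform inequality $|\sqrt[3]{m}-\sqrt[3]{n}|\gg |m-n|\max(m,n)^{-2/3}$ (from $m^{1/3}-n^{1/3}=(m-n)/(m^{2/3}+m^{1/3}n^{1/3}+n^{2/3})$), which turns the summand into $\ll T^{\varepsilon}\min(m,n)^{-2/3}|m-n|^{-1}$ after discarding the divisor weights via the pointwise bound $\tau_{4}(n)\ll n^{\varepsilon}$; the double sum is then $\ll T^{\varepsilon}\log T\sum_{\ell\le T}\ell^{-2/3}\ll T^{1/3+\varepsilon}$. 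The paper instead splits at the threshold $|\sqrt[3]{m}-\sqrt[3]{n}|\ge (mn)^{1/6}/10$: the far-apart pairs are bounded trivially by $\sum \tau_4^2(m)\tau_4^2(n)(mn)^{-5/6}$, while the near-diagonal pairs (which force $m\asymp n$) are treated with the mean-value-theorem asymptotic $|\sqrt[3]{m}-\sqrt[3]{n}|\asymp (mn)^{-1/3}|m-n|$, the inequality $ab\le(a^2+b^2)/2$, and the moment bound $\sum_{n\le x}\tau_4^4(n)\ll x(\log x)^{255}$ from Lemma \ref{lemma 6}. The trade-off is that the paper's route keeps the divisor weights and only loses powers of $\log T$, whereas yours concedes a $T^{\varepsilon}$ up front but needs no case split and no moment estimates; since the lemma is stated with a $T^{\varepsilon}$ anyway, both are equally adequate here. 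One presentational caveat: your paragraph contains two abandoned computations (the $N^{4/3+\varepsilon}$ bound and the ``wait, this still loses'' display) that are not part of the proof and contradict the conclusion; excise them and keep only the final three-step argument (gap inequality, pointwise divisor bound, harmonic double sum).
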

\begin{proof}

Firstly, we write
$$
\sum_{\substack{m,n\leq T\\ m\neq n}}\frac{\tau_{4}^{2}(m)\tau_{4}^{2}(n)}{(mn)^{\frac{2}{3}}|\sqrt[3]{m}-\sqrt[3]{n}|}=S_{1}+S_{2},
$$
where
$$
S_{1}=\sum_{\substack{m,n\leq T\\ |\sqrt[3]{m}-\sqrt[3]{n}|\geq (mn)^{1/6}/10}}\frac{\tau_{4}^{2}(m)\tau_{4}^{2}(n)}{(mn)^{\frac{2}{3}}|\sqrt[3]{m}-\sqrt[3]{n}|},
$$
$$
S_{2}=\sum_{\substack{m,n\leq T\\ 0<|\sqrt[3]{m}-\sqrt[3]{n}|< (mn)^{1/6}/10}}\frac{\tau_{4}^{2}(m)\tau_{4}^{2}(n)}{(mn)^{\frac{2}{3}}|\sqrt[3]{m}-\sqrt[3]{n}|}.
$$
Applying Lemma \ref{lemma 6} with $l=4$ and $q=2$, we have
$$
S_{1}\ll \sum_{m,n\leq T}\frac{\tau_{4}^{2}(m)\tau_{4}^{2}(n)}{(mn)^{\frac{5}{6}}}\ll T^{\frac{1}{3}+\varepsilon},
$$
where we used partial summation.

Secondly,  $0<|\sqrt[3]{m}-\sqrt[3]{n}|< {(mn)^{1/6}}/{10}$ implies that $m\asymp n$. And from the Lagrange theorem we have $|\sqrt[3]{m}-\sqrt[3]{n}|\asymp (mn)^{-1/3}|m-n|$.
By the formula $ab\leq (a^{2}+b^{2})/2$ and Lemma \ref{lemma 6} with $l=4$ and $q=4$ we get that
\begin{equation*}
\begin{aligned}
S_{2}&\ll\sum_{m\asymp n\leq T}\frac{\tau_{4}^{2}(m)\tau_{4}^{2}(n)}{(mn)^{{1}/{3}}|m-n|}\\
     &\ll\sum_{m\asymp n\leq T}\Big(\frac{\tau_{4}^{4}(m)}{m^{{2}/{3}}}+\frac{\tau_{4}^{4}(n)}{n^{{2}/{3}}}\Big)\frac{1}{|m-n|}\\
     &\ll\sum_{m\leq T}\frac{\tau_{4}^{4}(m)}{m^{{2}/{3}}}\sum_{m\asymp n}\frac{1}{|m-n|}\ll T^{\frac{1}{3}+\varepsilon}.
\end{aligned}
\end{equation*}

\end{proof}

\section{The proof of Theorem 1}
We begin  the proof  with formula (2.3) in \cite{Nowak}, which reads
\begin{equation}\label{3.1}
\begin{aligned}
S_{\textit{K}}(X,Y)&=\rho_{\textit{K}} Y+ \sum_{\substack{\mathcal{M},\mathcal{L}\in \mathcal{O}_{\textit{K}}\\1\leq N(\mathcal{M}\mathcal{L})\leq X}}N(\mathcal{M})\mu(\mathcal{L})P_{\textit{K}}\Big(\frac{Y}{N(\mathcal{M})}\Big)\\
      &=\rho_{\textit{K}} Y+\sum_{\substack{\mathcal{M},\mathcal{L}\in \mathcal{O}_{\textit{K}}\\1\leq N(\mathcal{M})N(\mathcal{L})\leq X}}N(\mathcal{M})\mu(\mathcal{L})P_{\textit{K}}\Big(\frac{Y}{N(\mathcal{M})}\Big).
\end{aligned}
\end{equation}
Let $\mathfrak{R}=\mathfrak{R}_{\textit{K}}(X,Y)$ denote the last sum in \eqref{3.1}.  We have
\begin{equation}\label{2zong}
\begin{aligned}
\mathfrak{R}&=\sum_{1\leq ml \leq X}ma_{\textit{K}}(m)\mu_{K}(l)P_{\textit{K}}\Big(\frac{Y}{m}\Big)\\
            &=\sum_{1\leq l \leq X}\mu_{K}(l)\sum_{1\leq m \leq X/l}ma_{\textit{K}}(m)P_{\textit{K}}\Big(\frac{Y}{m}\Big)\\
            &=\mathfrak{R_{1}^{\dag}}+\mathfrak{R_{2}^{\dag}},
\end{aligned}
\end{equation}
where
\begin{equation*}
\begin{aligned}
&\mathfrak{R_{1}^{\dag}}:=\sum_{1\leq l \leq X^{1-\varepsilon}}\mu_{K}(l)\sum_{1\leq m \leq X/l}ma_{\textit{K}}(m)P_{\textit{K}}\Big(\frac{Y}{m}\Big),\\
&\mathfrak{R_{2}^{\dag}}:=\sum_{X^{1-\varepsilon}< l \leq X}\mu_{K}(l)\sum_{1\leq m \leq X/l}ma_{\textit{K}}(m)P_{\textit{K}}\Big(\frac{Y}{m}\Big).
\end{aligned}
\end{equation*}

Firstly, we bound $\mathfrak{R_{2}^{\dag}}$.
M\"{u}ller \cite{Muller} proved that $P_{\textit{K}}(x)=O(x^{\frac{43}{96}+\varepsilon})$.
So we can easily derived that
\begin{equation}\label{R2}
\mathfrak{R_{2}^{\dag}}\ll XY^{43/96+\varepsilon}.
\end{equation}

Secondly, we consider $\mathfrak{R_{1}^{\dag}}$.
We can write
\begin{equation}\label{R_1+}
\mathfrak{R_{1}^{\dag}}:=\sum_{1\leq l \leq X^{1-\varepsilon}}\mu_{K}(l)\mathfrak{R_{1}}(X_{l},Y),
\end{equation}
where
\begin{equation}\label{R1-1}
\mathfrak{R_{1}}(X_{l},Y)=\sum_{1\leq m \leq X_{l}}ma_{\textit{K}}(m)P_{\textit{K}}\Big(\frac{Y}{m}\Big),\qquad
X_{l}=X/l.
\end{equation}
Using \eqref{a-1b},
we can write
\begin{equation}\label{R1-2}
\mathfrak{R_{1}}(X_{l},Y)=\sum_{1\leq m_{1}m_{2}\leq X_{l}}m_{1}m_{2}b(m_{2})P_{\textit{K}}\Big(\frac{Y}{m_{1}m_{2}}\Big).
\end{equation}
By a splitting argument, $\mathfrak{R_{1}}(X_{l},Y)$ can be written as a  sum of the following
\begin{equation}\label{R1-3}
R(M_{1},M_{2}):=\sum_{\substack{1\leq m_{1}m_{2}\leq X_{l}\\M_{j}<m_{j}\leq 2M_{j}(j=1,2)}}m_{1}m_{2}b(m_{2})P_{\textit{K}}\Big(\frac{Y}{m_{1}m_{2}}\Big).
\end{equation}
Suppose that $ y\ll Y/ M_{1}M_{2}$ is a parameter to be determined. By Lemma \ref{lemma 3}, we have
\begin{equation*}
\begin{aligned}
R(M_{1},M_{2})&=\frac{Y^{\frac{1}{3}}}{\sqrt{3}\pi}\sum_{\substack{1\leq m_{1}m_{2}\leq X_{l}\\M_{j}<m_{j}\leq 2M_{j}(j=1,2)}}(m_{1}m_{2})^{\frac{2}{3}}b(m_{2})\sum_{n\leq y}\frac{a_{\textit{K}}(n)}{n^{2/3}}\cos\Big(6\pi\sqrt[3]{\frac{nY}{m_{1}m_{2}}}\Big)\\
&\qquad \qquad+O((M_{1}M_{2})^{4/3}Y^{2/3+\varepsilon}y^{-1/3}).
\end{aligned}
\end{equation*}
By a splitting argument to the sum over $n$ we get that
\begin{equation}\label{R(m1,m2)}
\begin{aligned}
R(M_{1},M_{2})&\ll Y^{\frac{1}{3}}(M_{1}M_{2})^{\frac{2}{3}+\varepsilon}N^{-\frac{2}{3}+\varepsilon}|R^{*}(M_{1},M_{2},N)|\\
&+O((M_{1}M_{2})^{4/3}Y^{2/3+\varepsilon}y^{-1/3})
\end{aligned}
\end{equation}
for some $1\ll N\ll y$, where
\begin{equation*}
\begin{aligned}
R^{*}(M_{1},M_{2},N)=\sum_{\substack{1\leq m_{1}m_{2}\leq X_{l}\\M_{j}<m_{j}\leq 2M_{j}(j=1,2)}}\Big(\frac{m_{1}}{M_{1}}\Big)^{\frac{2}{3}}\Big(\frac{m_{2}}{M_{2}}\Big)^{\frac{2}{3}}\frac{b(m_{2})}{M_{2}^{\varepsilon}}\sum_{N<n \leq 2N}c(n)e\Big(6\pi\sqrt[3]{\frac{nY}{m_{1}m_{2}}}\Big)
\end{aligned}
\end{equation*}
with
$$
c(n)=\frac{a_{\textit{K}}(n)}{N^{\varepsilon}}\Big(\frac{N}{n}\Big)^{\frac{2}{3}}.
$$

Now, we give our first estimate for the sum $R^{*}(M_{1},M_{2},N)$.
Obviously, we have
\begin{equation}\label{111}
R^{*}(M_{1},M_{2},N)\ll R^{\dag}(M_{1},M_{2},N),
\end{equation}
where
$$
R^{\dag}(M_{1},M_{2},N)=\sum_{M_{2}<m_{2}\leq 2M_{2}}\sum_{N<n\leq 2N}\Big|\sum_{M_{1}<m_{1}\leq 2M_{1}}e\Big(6\pi\sqrt[3]{\frac{nY}{m_{1}m_{2}}}\Big)\Big|^{*}.
$$
By taking $(H,N,M)=(M_{2},N,M_{1})$ and $U=\sqrt[3]{NY}/\sqrt[3]{M_{1}M_{2}}$ in Lemma \ref{lemma 4}, we get that
$$R^{\dag}(M_{1},M_{2},N)Y^{-\varepsilon}\ll N^{\frac{5}{6}}Y^{\frac{1}{12}}{M_{1}}^{\frac{5}{12}}M_{2}^{\frac{2}{3}}+N{M_{1}}^{\frac{1}{2}}M_{2}+N^{\frac{2}{3}}Y^{-\frac{1}{3}}(M_{1}M_{2})^{\frac{4}{3}},
$$
which combines \eqref{111} gives
\begin{equation}\label{1111}
\begin{aligned}
&R^{*}(M_{1},M_{2},N)Y^{-\varepsilon}\\
&\ll N^{\frac{5}{6}}Y^{\frac{1}{12}}{M_{1}}^{\frac{5}{12}}M_{2}^{\frac{2}{3}}+N{M_{1}}^{\frac{1}{2}}M_{2}+N^{\frac{2}{3}}Y^{-\frac{1}{3}}(M_{1}M_{2})^{\frac{4}{3}},\\
&=N^{\frac{5}{6}}Y^{\frac{1}{12}}(M_{1}M_{2})^{\frac{5}{12}}M_{2}^{\frac{1}{4}}+N(M_{1}M_{2})^{\frac{1}{2}}M_{2}^{\frac{1}{2}}+N^{\frac{2}{3}}Y^{-\frac{1}{3}}(M_{1}M_{2})^{\frac{4}{3}}.
\end{aligned}
\end{equation}

Next, we give another estimate for $R^{*}(M_{1}, M_{2}, N)$.
Clearly we have
\begin{equation}\label{?}
R^{*}(M_{1}, M_{2}, N)\ll R^{\ddag}(M_{1}, M_{2}, N),
\end{equation}
where
\begin{equation*}
 R^{\ddag}(M_{1}, M_{2}, N)=\sum_{M_{1}<m_{1}\leq 2M_{1}}\sum_{N<n\leq 2N}\sum_{M_{2}<m_{2}\leq 2M_{2}}e\Big(6\pi\sqrt[3]{\frac{nY}{m_{1}m_{2}}}\Big).
\end{equation*}
By taking $(H, N, M)=(M_{1}, N, M_{2})$ and $U=\sqrt[3]{NY}/\sqrt[3]{M_{1}M_{2}}$ in Lemma \ref{lemma 4}, we get that
\begin{equation*}
\begin{split}
R^{\ddag}(M_{1},M_{2},N)Y^{-\varepsilon}
&\ll
N^{\frac{5}{6}}Y^{\frac{1}{12}}(M_{1}M_{2})^{\frac{5}{12}}M_{1}^{\frac{1}{4}}+N^{\frac{3}{4}}(M_{1}M_{2})^{\frac{3}{4}}M_{2}^{\frac{1}{4}}\\
&\qquad \qquad+N(M_{1}M_{2})^{\frac{1}{2}}M_{1}^{\frac{1}{2}}+N^{\frac{5}{6}}Y^{-\frac{1}{6}}(M_{1}M_{2})^{\frac{7}{6}}\\
&\ll
N^{\frac{5}{6}}Y^{\frac{1}{12}}(M_{1}M_{2})^{\frac{5}{12}}M_{1}^{\frac{1}{4}}+N^{\frac{3}{4}}(M_{1}M_{2})^{\frac{3}{4}}(M_{1}M_{2})^{\frac{1}{4}}
\\
&\qquad \qquad+N(M_{1}M_{2})^{\frac{1}{2}}M_{1}^{\frac{1}{2}}+N^{\frac{5}{6}}Y^{-\frac{1}{6}}(M_{1}M_{2})^{\frac{7}{6}}\\
&\ll
N^{\frac{5}{6}}Y^{\frac{1}{12}}(M_{1}M_{2})^{\frac{5}{12}}M_{1}^{\frac{1}{4}}+N^{\frac{3}{4}}(M_{1}M_{2})
\\
&\qquad \qquad+N(M_{1}M_{2})^{\frac{1}{2}}M_{1}^{\frac{1}{2}}+N^{\frac{5}{6}}Y^{-\frac{1}{6}}(M_{1}M_{2})^{\frac{7}{6}}.
\end{split}
\end{equation*}
So
\begin{equation}\label{2222}
\begin{split}
R^{*}(M_{1},M_{2},N)Y^{-\varepsilon}
&\ll
N^{\frac{5}{6}}Y^{\frac{1}{12}}(M_{1}M_{2})^{\frac{5}{12}}M_{1}^{\frac{1}{4}}+N(M_{1}M_{2})^{\frac{1}{2}}M_{1}^{\frac{1}{2}}
\\
&\qquad \qquad+N^{\frac{3}{4}}(M_{1}M_{2})+N^{\frac{5}{6}}Y^{-\frac{1}{6}}(M_{1}M_{2})^{\frac{7}{6}}.
\end{split}
\end{equation}

From \eqref{1111} and \eqref{2222}, we get
\begin{equation*}
\begin{split}
R^{*}(M_{1},M_{2},N)Y^{-\varepsilon}&\ll J_{1}+J_{2}+J_{3}+J_{4}+
N^{\frac{5}{6}}Y^{-\frac{1}{6}}(M_{1}M_{2})^{\frac{7}{6}}\\
&\qquad +N^{\frac{3}{4}}(M_{1}M_{2})
+N^{\frac{2}{3}}Y^{-\frac{1}{3}}(M_{1}M_{2})^{\frac{4}{3}},
\end{split}
\end{equation*}
where
\begin{equation*}
\begin{aligned}
&J_{1}=\min\Big(N(M_{1}M_{2})^{\frac{1}{2}}M_{2}^{\frac{1}{2}},N^{\frac{5}{6}}Y^{\frac{1}{12}}(M_{1}M_{2})^{\frac{5}{12}}M_{1}^{\frac{1}{4}}\Big),\\
&J_{2}=\min\Big(N^{\frac{5}{6}}Y^{\frac{1}{12}}(M_{1}M_{2})^{\frac{5}{12}}M_{2}^{\frac{1}{4}},N^{\frac{5}{6}}Y^{\frac{1}{12}}(M_{1}M_{2})^{\frac{5}{12}}M_{1}^{\frac{1}{4}}\Big),\\
&J_{3}=\min\Big(N(M_{1}M_{2})^{\frac{1}{2}}M_{2}^{\frac{1}{2}},N(M_{1}M_{2})^{\frac{1}{2}}M_{1}^{\frac{1}{2}}\Big),\\
&J_{4}=\min\Big(N^{\frac{5}{6}}Y^{\frac{1}{12}}(M_{1}M_{2})^{\frac{5}{12}}M_{2}^{\frac{1}{4}},N(M_{1}M_{2})^{\frac{1}{2}}M_{1}^{\frac{1}{2}}\Big).
\end{aligned}
\end{equation*}
Noticing the fact that
$
\min(X_{1},\ldots,X_{k}) \leq X_{1}^{a_{1}} \ldots   X_{k}^{a_{k}},
$
where
$X_{1}, \ldots,X_{k}>0$, $a_{1},\ldots,a_{k}\geq 0$ satisfying  $a_{1}+\ldots+a_{k}=1$,
we have
\begin{equation*}
\begin{aligned}
J_{1}&\leq  \Big(N(M_{1}M_{2})^{\frac{1}{2}}M_{2}^{\frac{1}{2}}\Big)^{\frac{1}{3}}\Big(N^{\frac{5}{6}}Y^{\frac{1}{12}}(M_{1}M_{2})^{\frac{5}{12}}M_{1}^{\frac{1}{4}}\Big)^{\frac{2}{3}}
  \leq N^{\frac{8}{9}}Y^{\frac{1}{18}}(M_{1}M_{2})^{\frac{11}{18}},\\
J_{2}&\leq  \Big(N^{\frac{5}{6}}Y^{\frac{1}{12}}(M_{1}M_{2})^{\frac{5}{12}}M_{2}^{\frac{1}{4}}\Big)^{\frac{1}{2}}
\Big(N^{\frac{5}{6}}Y^{\frac{1}{12}}(M_{1}M_{2})^{\frac{5}{12}}M_{1}^{\frac{1}{4}}\Big)^{\frac{1}{2}}
 \leq N^{\frac{5}{6}}Y^{\frac{1}{12}}(M_{1}M_{2})^{\frac{13}{24}},\\
J_{3}&\leq  \Big(N(M_{1}M_{2})^{\frac{1}{2}}M_{2}^{\frac{1}{2}}\Big)^{\frac{1}{2}}\Big(N(M_{1}M_{2})^{\frac{1}{2}}M_{1}^{\frac{1}{2}}\Big)^{\frac{1}{2}}
\leq N(M_{1}M_{2})^{\frac{3}{4}},\\
J_{4}&\leq  \Big(N^{\frac{5}{6}}Y^{\frac{1}{12}}(M_{1}M_{2})^{\frac{5}{12}}M_{2}^{\frac{1}{4}}\Big)^{\frac{2}{3}}
\Big(N(M_{1}M_{2})^{\frac{1}{2}}M_{1}^{\frac{1}{2}}\Big)^{\frac{1}{3}}
\leq N^{\frac{8}{9}}Y^{\frac{1}{18}}(M_{1}M_{2})^{\frac{11}{18}}.
\end{aligned}
\end{equation*}
It now follows that
\begin{equation}\label{R*(m1,m2)}
\begin{split}
&R^{*}(M_{1},M_{2},N)Y^{-\varepsilon}\\
 &\ll 
N^{\frac{8}{9}}Y^{\frac{1}{18}}(M_{1}M_{2})^{\frac{11}{18}}+N(M_{1}M_{2})^{\frac{3}{4}}+N^{\frac{5}{6}}Y^{\frac{1}{12}}(M_{1}M_{2})^{\frac{13}{24}}
\\& 
\qquad +N^{\frac{5}{6}}Y^{-\frac{1}{6}}(M_{1}M_{2})^{\frac{7}{6}}
+N^{\frac{3}{4}}(M_{1}M_{2})
+N^{\frac{2}{3}}Y^{-\frac{1}{3}}(M_{1}M_{2})^{\frac{4}{3}}.
\end{split}
\end{equation}
Combining  \eqref{R(m1,m2)} with \eqref{R*(m1,m2)}, we  get (recalling $N\ll y$)
\begin{equation}\label{+R(m1,m2)}
\begin{split}
&R(M_{1},M_{2})Y^{-\varepsilon}\\
&\ll N^{\frac{2}{9}}Y^{\frac{7}{18}}(M_{1}M_{2})^{\frac{23}{18}}
+ N^{\frac{1}{3}}Y^{\frac{1}{3}}(M_{1}M_{2})^{\frac{17}{12}}
+ N^{\frac{1}{6}}Y^{\frac{5}{12}}(M_{1}M_{2})^{\frac{29}{24}}\\
&\qquad + N^{\frac{1}{6}}Y^{\frac{1}{6}}(M_{1}M_{2})^{\frac{11}{6}}
+ N^{\frac{1}{12}}Y^{\frac{1}{3}}(M_{1}M_{2})^{\frac{5}{3}}
+ Y^{\frac{2}{3}}(M_{1}M_{2})^{\frac{4}{3}}y^{-\frac{1}{3}}
+(M_{1}M_{2})^{2}\\
&\ll
Y^{\frac{7}{18}}y^{\frac{2}{9}}(M_{1}M_{2})^{\frac{23}{18}}
+ Y^{\frac{1}{3}}y^{\frac{1}{3}}(M_{1}M_{2})^{\frac{17}{12}}
+ Y^{\frac{5}{12}}y^{\frac{1}{6}}(M_{1}M_{2})^{\frac{29}{24}}\\
&\qquad + Y^{\frac{1}{6}}y^{\frac{1}{6}}(M_{1}M_{2})^{\frac{11}{6}}
+ Y^{\frac{1}{3}}y^{\frac{1}{12}}(M_{1}M_{2})^{\frac{5}{3}}
+ Y^{\frac{2}{3}}(M_{1}M_{2})^{\frac{4}{3}}y^{-\frac{1}{3}}
+(M_{1}M_{2})^{2}.
\end{split}
\end{equation}
By choosing a best $y$ with Lemma \ref{lemma 3} ~(recalling that $X_{l}=X/l$), we get that
\begin{equation}\label{R-4}
\begin{aligned}
R(M_{1},M_{2})Y^{-\varepsilon}
&\ll
Y^{\frac{1}{2}}(M_{1}M_{2})^{\frac{13}{10}}+
Y^{\frac{1}{2}}(M_{1}M_{2})^{\frac{11}{8}}+
Y^{\frac{1}{2}}(M_{1}M_{2})^{\frac{5}{4}}\\
&\qquad \qquad+
Y^{\frac{1}{3}}(M_{1}M_{2})^{\frac{5}{3}}+
Y^{\frac{2}{5}}(M_{1}M_{2})^{\frac{8}{5}}+
(M_{1}M_{2})^{2}\\
&\ll
Y^{\frac{1}{2}}{X_{l}}^{\frac{13}{10}}+
Y^{\frac{1}{2}}{X_{l}}^{\frac{11}{8}}+
Y^{\frac{1}{2}}{X_{l}}^{\frac{5}{4}}+
Y^{\frac{1}{3}}{X_{l}}^{\frac{5}{3}}+
Y^{\frac{2}{5}}{X_{l}}^{\frac{8}{5}}+
{X_{l}}^{2}.
\end{aligned}
\end{equation}

From \eqref{R_1+}-\eqref{R1-3} and \eqref{R-4}, we get
\begin{equation*}
\begin{aligned}
\mathfrak{R}_{1}^{\dag}Y^{-\varepsilon}&\ll
{X}^{\frac{13}{10}}Y^{\frac{1}{2}}+
{X}^{\frac{11}{8}}Y^{\frac{1}{2}}+
{X}^{\frac{5}{4}}Y^{\frac{1}{2}}+
{X}^{\frac{5}{3}}Y^{\frac{1}{3}}+
{X}^{\frac{8}{5}}Y^{\frac{2}{5}}+
{X}^{2}\\
&\ll
{X}^{\frac{11}{8}}Y^{\frac{1}{2}}+
{X}^{\frac{8}{5}}Y^{\frac{2}{5}}
\end{aligned}
\end{equation*}
by noting that $Y\geq X$.
This together with \eqref{2zong} and \eqref{R2} yields
\begin{equation*}
\mathfrak{R}Y^{-\varepsilon}\ll
{X}^{\frac{11}{8}}Y^{\frac{1}{2}}+
{X}^{\frac{8}{5}}Y^{\frac{2}{5}}+XY^{\frac{43}{96}}
\ll {X}^{\frac{11}{8}}Y^{\frac{1}{2}}+
{X}^{\frac{8}{5}}Y^{\frac{2}{5}}.
\end{equation*}
This completes the proof of Theorem 1.

\section{The proof of Theorem 2}
We begin with the first expression of $\mathfrak{R}$ in \eqref{2zong}
\begin{equation}\label{R_1+R_2}
\begin{aligned}
\mathfrak{R}&=\sum_{1\leq ml \leq X}ma_{\textit{K}}(m)\mu_{K}(l)P_{\textit{K}}\Big(\frac{Y}{m}\Big)\\
&=\sum_{1\leq m \leq X}ma_{\textit{K}}(m)\textit{M}_{\textit{K}}\Big(\frac{X}{m}\Big)P_{\textit{K}}\Big(\frac{Y}{m}\Big)\\
&=\mathfrak{R}_{1}+\mathfrak{R}_{2},
\end{aligned}
\end{equation}
where
\begin{equation*}
\begin{aligned}
&\mathfrak{R}_{1}=
\frac{Y^{1/3}}{\sqrt{3}\pi}\sum_{m\leq X}m^{2/3}a_{\textit{K}}(m)\textit{M}_{\textit{K}}\Big(\frac{X}{m}\Big)\sum_{n\leq y}
\frac{a_{\textit{K}}(n)}{n^{{2}/{3}}}\cos \Big(6\pi\sqrt[3]{\frac{nY}{m}}\Big),\\
&\mathfrak{R}_{2}=
\sum_{1\leq m \leq X}ma_{\textit{K}}(m)\textit{M}_{\textit{K}}\Big(\frac{X}{m}\Big)P_{2}\Big(\frac{Y}{m}\Big).
\end{aligned}
\end{equation*}

\vskip 3mm

A. \textit{Evaluation  of  $\int_{T}^{2T}\mathfrak{R}_{2}^{2}dY$}
\vskip 3mm
Suppose that $0<y<\big(\frac{T}{X}\big)^{1/3}$,
it is not hard to find that
\begin{equation*}
\begin{split}
\mathfrak{R}_{2}&\ll
\sum_{ m\sim M}ma_{\textit{K}}(m)\textit{M}_{\textit{K}}\Big(\frac{X}{m}\Big)P_{2}\Big(\frac{Y}{m}\Big)\log X\\
&\ll
X\sum_{ m\sim M}a_{\textit{K}}(m)P_{2}\Big(\frac{Y}{m}\Big)\log X
\end{split}
\end{equation*}
for some  $1\ll M\ll X$ and $\textit{M}_{\textit{K}}(t)\ll t$.
By Cauchy's inequality we get
\begin{equation*}
\begin{split}
\mathfrak{R}_{2}^{2}
&\ll
X^{2}\sum_{ m\sim M}a_{\textit{K}}(m)\sum_{ m\sim M}a_{\textit{K}}(m)P_{2}^{2}\Big(\frac{Y}{m}\Big)\log^{2}X\\
&\ll
X^{2}M\sum_{ m\sim M}a_{\textit{K}}(m)P_{2}^{2}\Big(\frac{Y}{m}\Big)\log^{2}X,
\end{split}
\end{equation*}
which together with $Xy^{3}\ll T$ imply that
\begin{equation}\label{r2r2}
\begin{split}
\int_{T}^{2T}\mathfrak{R}_{2}^{2}dY
&\ll
X^{2}M\sum_{ m\sim M}a_{\textit{K}}(m)\log^{2}X\int_{T}^{2T}P_{2}^{2}\Big(\frac{Y}{m}\Big)dY\\
&\ll
X^{2}M\sum_{ m\sim M}a_{\textit{K}}(m)m\log^{2}X\int_{T}^{2T}P_{2}^{2}\Big(\frac{Y}{m}\Big)d\Big(\frac{Y}{m}\Big)\\
&\ll
X^{2}M\sum_{ m\sim M}a_{\textit{K}}(m)m\Big(\frac{T}{m}\Big)^{\frac{5}{3}+\varepsilon}y^{-\frac{1}{3}}\log^{2}X\\
&\ll
X^{2}M^{\frac{4}{3}}T^{\frac{5}{3}+\varepsilon}y^{-\frac{1}{3}}\\
&\ll
X^{\frac{10}{3}}T^{\frac{5}{3}+\varepsilon}y^{-\frac{1}{3}}.
\end{split}
\end{equation}

\vskip 3mm

B. \textit{Evaluation  of  $\int_{T}^{2T}\mathfrak{R}_{1}^{2}dY$}

\vskip 3mm

Noticing that
\begin{equation*}
\begin{aligned}
\mathfrak{R}_{1}^{2}=\frac{Y^{\frac{2}{3}}}{3\pi^{2}}&
\sum_{1\leq m_{1},m_{2}\leq X}(m_{1}m_{2})^{\frac{2}{3}}a_{\textit{K}}(m_{1})a_{\textit{K}}(m_{2})\textit{M}_{\textit{K}}\Big(\frac{X}{m_{1}}\Big)\textit{M}_{\textit{K}}\Big(\frac{X}{m_{2}}\Big)\\
&\times \sum_{n_{1},n_{2}\leq y}\frac{a_{\textit{K}}(n_{1})}{n_{1}^{{2}/{3}}}\frac{a_{\textit{K}}(n_{2})}{n_{2}^{{2}/{3}}}\cos \Big(6\pi\sqrt[3]{\frac{n_{1}Y}{m_{1}}}\Big)\cos \Big(6\pi\sqrt[3]{\frac{n_{2}Y}{m_{2}}}\Big)
\end{aligned}
\end{equation*}
and using the elementary formula $
\cos \alpha \cos \beta=\frac{1}{2}\big(\cos (\alpha-\beta)+\cos (\alpha+\beta)\big)
$
give
\begin{equation}\label{Q1+Q2+Q3}
\mathfrak{R}_{1}^{2}=Q_{1}(Y)+Q_{2}(Y)+Q_{3}(Y),
\end{equation}
where
\begin{equation*}
\begin{aligned}
Q_{1}(Y)
&  :=\frac{Y^{\frac{2}{3}}}{6\pi^{2}}\sum_{\substack{m_{1},m_{2}\leq X;n_{1},n_{2}\leq y\\n_{1}m_{2}=n_{2}m_{1}}}(m_{1}m_{2})^{\frac{2}{3}}a_{\textit{K}}(m_{1})a_{\textit{K}}(m_{2})\\
&\qquad       \qquad  \qquad \qquad \times\textit{M}_{\textit{K}}\Big(\frac{X}{m_{1}}\Big)\textit{M}_{\textit{K}}\Big(\frac{X}{m_{2}}\Big)
\frac{a_{\textit{K}}(n_{1})}{n_{1}^{{2}/{3}}}\frac{a_{\textit{K}}(n_{2})}{n_{2}^{{2}/{3}}},
\\ \noalign{\vskip 1mm}
Q_{2}(Y)
&  :=\frac{Y^{\frac{2}{3}}}{6\pi^{2}}\sum_{\substack{m_{1},m_{2}\leq X;n_{1},n_{2}\leq y\\n_{1}m_{2}\neq n_{2}m_{1}}}(m_{1}m_{2})^{\frac{2}{3}}a_{\textit{K}}(m_{1})a_{\textit{K}}(m_{2})\textit{M}_{\textit{K}}\Big(\frac{X}{m_{1}}\Big)\textit{M}_{\textit{K}}\Big(\frac{X}{m_{2}}\Big)
\\ \noalign{\vskip 1mm}
&\qquad       \qquad  \qquad  \qquad \times\frac{a_{\textit{K}}(n_{1})}{n_{1}^{{2}/{3}}}\frac{a_{\textit{K}}(n_{2})}{n_{2}^{{2}/{3}}}\cos \Big(6\pi\sqrt[3]{Y}\Big(\sqrt[3]{\frac{n_{1}}{m_{1}}}-\sqrt[3]{\frac{n_{2}}{m_{2}}}\Big)\Big),
\\ \noalign{\vskip 1mm}
Q_{3}(Y)
&  :=\frac{Y^{\frac{2}{3}}}{6\pi^{2}}\sum_{\substack{m_{1},m_{2}\leq X\\n_{1},n_{2}\leq y}}(m_{1}m_{2})^{\frac{2}{3}}a_{\textit{K}}(m_{1})a_{\textit{K}}(m_{2})\textit{M}_{\textit{K}}\Big(\frac{X}{m_{1}}\Big)\textit{M}_{\textit{K}}\Big(\frac{X}{m_{2}}\Big)
\\
& \qquad  \qquad     \qquad \qquad
\times \frac{a_{\textit{K}}(n_{1})}{n_{1}^{{2}/{3}}}\frac{a_{\textit{K}}(n_{2})}{n_{2}^{{2}/{3}}}\cos \Big(6\pi\sqrt[3]{Y}\Big(\sqrt[3]{\frac{n_{1}}{m_{1}}}+\sqrt[3]{\frac{n_{2}}{m_{2}}}\Big)\Big).
\end{aligned}
\end{equation*}

Firstly, we consider $Q_{3}(Y)$. By using the first derivative test, \eqref{M(x)} and the elementary formula $a+b\geq 2\sqrt{ab}$ ($a>0, b>0$), we get
\begin{equation}\label{3-1}
\begin{aligned}
\int_{T}^{2T}Q_{3}(Y)dY
&\ll T^{\frac{4}{3}}\sum_{\substack{m_{1},m_{2}\leq X\\n_{1},n_{2}\leq y}}(m_{1}m_{2})^{\frac{2}{3}}a_{\textit{K}}(m_{1})a_{\textit{K}}(m_{2})\Big|\textit{M}_{\textit{K}}\Big(\frac{X}{m_{1}}\Big)\textit{M}_{\textit{K}}\Big(\frac{X}{m_{2}}\Big)\Big|\\
& \qquad  \qquad     \qquad
\times \frac{a_{\textit{K}}(n_{1})}{n_{1}^{{2}/{3}}}\frac{a_{\textit{K}}(n_{2})}{n_{2}^{{2}/{3}}}\times \frac{1}{\sqrt[3]{\frac{n_{1}}{m_{1}}}+\sqrt[3]{\frac{n_{2}}{m_{2}}}}
\\
&\ll X^{2}T^{\frac{4}{3}}\sum_{m_{1},m_{2}\leq X}\frac{a_{\textit{K}}(m_{1})a_{\textit{K}}(m_{2})}{(m_{1}m_{2})^{{1}/{6}}}\sum_{n_{1},n_{2}\leq y}\frac{a_{\textit{K}}(m)}{n_{1}^{{5}/{6}}}\frac{a_{\textit{K}}(m)}{n_{2}^{{5}/{6}}}
\\&
\ll X^{\frac{11}{3}}T^{\frac{4}{3}}y^{\frac{1}{3}},
\end{aligned}
\end{equation}
where in the last step we used \eqref{dd} and partial summation.

Secondly, we consider $Q_{2}(Y)$. By the first derivative test and \eqref{M(x)} again we get with the help of Lemma \ref{lemma 7} that
\begin{equation}\label{3-2}
\begin{aligned}
&\int_{T}^{2T}Q_{2}(Y)dY\\
&\ll T^{\frac{4}{3}}\sum_{\substack{m_{1},m_{2}\leq X;n_{1},n_{2}\leq y\\n_{1}m_{2}\neq n_{2}m_{1}}}(m_{1}m_{2})^{\frac{2}{3}}a_{\textit{K}}(m_{1})a_{\textit{K}}(m_{2})\Big|\textit{M}_{\textit{K}}\Big(\frac{X}{m_{1}}\Big)\textit{M}_{\textit{K}}\Big(\frac{X}{m_{2}}\Big)\Big|\\
& \qquad  \qquad     \qquad
\times \frac{a_{\textit{K}}(n_{1})}{n_{1}^{{2}/{3}}}\frac{a_{\textit{K}}(n_{2})}{n_{2}^{{2}/{3}}}\times \frac{1}{\Big |\sqrt[3]{\frac{n_{1}}{m_{1}}}-\sqrt[3]{\frac{n_{2}}{m_{2}}}\Big|}
\\
&\ll X^{2}T^{\frac{4}{3}} \sum_{\substack{m_{1},m_{2}\leq X;n_{1},n_{2}\leq y\\n_{1}m_{2}\neq n_{2}m_{1}}}
\frac{a_{\textit{K}}(m_{1})a_{\textit{K}}(m_{2})a_{\textit{K}}(n_{1})a_{\textit{K}}(n_{2})}{(n_{1}n_{2})^{{2}/{3}}|\sqrt[3]{n_{1}m_{2}}-\sqrt[3]{n_{2}m_{1}}|}
\\&
\ll X^{\frac{10}{3}}T^{\frac{4}{3}}\sum_{\substack{m_{1},m_{2}\leq X;n_{1},n_{2}\leq y\\n_{1}m_{2}\neq n_{2}m_{1}}}
\frac{a_{\textit{K}}(m_{1})a_{\textit{K}}(m_{2})a_{\textit{K}}(n_{1})a_{\textit{K}}(n_{2})}{(m_{1}m_{2})^{{2}/{3}}(n_{1}n_{2})^{{2}/{3}}|\sqrt[3]{n_{1}m_{2}}-\sqrt[3]{n_{2}m_{1}}|}
\\&
\ll X^{\frac{10}{3}}T^{\frac{4}{3}}\sum_{\substack{l_{1},l_{2}\leq Xy\\l_{1}\neq l_{2}}}\frac{\tau_{4}^{2}(l_{1})\tau_{4}^{2}(l_{2})}{l_{1}^{2/3}l_{2}^{2/3}|\sqrt[3]{l_{1}}-\sqrt[3]{l_{2}}|}\\
&
\ll T^{\frac{4}{3}}X^{\frac{10}{3}}(Xy)^{\frac{1}{3}+\varepsilon}\\
&\ll X^{\frac{11}{3}}T^{\frac{4}{3}+\varepsilon}y^{\frac{1}{3}},
\end{aligned}
\end{equation}
where we used the estimate $a_{\textit{K}}(m)a_{\textit{K}}(n)\leq \tau^{2}(m)\tau^{2}(n)\leq \tau_{4}^{2}(mn)$.

Finally, we consider $Q_{1}(Y)$.
Let $m=(m_{1},m_{2})$. Write $m_{1}=mm_{1}^{*},~ m_{2}=mm_{2}^{*}$  such that $(m_{1}^{*}, ~m_{2}^{*})=1$. If $n_{1}m_{2}=n_{2}m_{1}$, we immediately get that
$n_{1}=nm_{1}^{*}, ~n_{2}=nm_{2}^{*}$  for some positive integer $n$. It follows that
\begin{equation}\label{Q1(Y)}
\begin{aligned}
Q_{1}(Y)&=\frac{Y^{\frac{2}{3}}}{6\pi^{2}}\sum_{\substack{mm_{1},mm_{2}\leq X\\gcd(m_{1},m_{2})=1}}m^{4/3}a_{\textit{K}}(mm_{1})a_{\textit{K}}(mm_{2})\textit{M}_{\textit{K}}\Big(
\frac{X}{mm_{1}}\Big)\textit{M}_{\textit{K}}\Big({\frac{X}{mm_{2}}}\Big)
\\& \qquad   \qquad \qquad
\times \sum_{n\leq \min(\frac{y}{m_{1}},\frac{y}{m_{2}})}\frac{a_{\textit{K}}(nm_{1})a_{\textit{K}}(nm_{2})}{n^{4/3}}\\
&=c(X)Y^{\frac{2}{3}}+E(Y),
\end{aligned}
\end{equation}
where
\begin{equation}
\begin{aligned}\label{cx}
&c(X)=\frac{1}{6\pi^{2}}\sum_{\substack{mm_{1},mm_{2}\leq X\\gcd(m_{1},m_{2})=1}}m^{4/3}a_{\textit{K}}(mm_{1})a_{\textit{K}}(mm_{2})\textit{M}_{\textit{K}}\Big(\frac{X}{mm_{1}}\Big)\textit{M}_{\textit{K}}\Big(\frac{X}{mm_{2}}\Big)
\\& \qquad   \qquad \qquad
\times \sum_{n=1}^{\infty}\frac{a_{\textit{K}}(nm_{1})a_{\textit{K}}(nm_{2})}{n^{4/3}},\\
&
E(Y)=\frac{Y^{\frac{2}{3}}}{6\pi^{2}}\sum_{\substack{mm_{1},mm_{2}\leq X\\gcd(m_{1},m_{2})=1}}m^{4/3}a_{\textit{K}}(mm_{1})a_{\textit{K}}(mm_{2})\textit{M}_{\textit{K}}\Big(\frac{X}{mm_{1}}\Big)\textit{M}_{\textit{K}}\Big(\frac{X}{mm_{2}}\Big)
\\& \qquad   \qquad \qquad
\times \sum_{n> \min(\frac{y}{m_{1}},\frac{y}{m_{2}})}\frac{a_{\textit{K}}(nm_{1})a_{\textit{K}}(nm_{2})}{n^{4/3}}.
\end{aligned}
\end{equation}
Noting that $a_{\textit{K}}(mn)\leq \tau^{2}(mn)\leq \tau^{2}(m)\tau^{2}(n)$, we get that
\begin{equation}\label{E(Y)}
\begin{aligned}
E(Y)&\ll Y^{\frac{2}{3}}\sum_{\substack{mm_{1},mm_{2}\leq X\\gcd(m_{1},m_{2})=1}}m^{4/3}a_{\textit{K}}(mm_{1})a_{\textit{K}}(mm_{2})\textit{M}_{\textit{K}}\Big(\frac{X}{mm_{1}}\Big)\textit{M}_{\textit{K}}\Big(\frac{X}{mm_{2}}\Big)
\\& \qquad   \qquad \qquad
\times \sum_{n> \min(\frac{y}{m_{1}},\frac{y}{m_{2}})}\frac{a_{\textit{K}}(nm_{1})a_{\textit{K}}(nm_{2})}{n^{4/3}}\\
&\ll X^{2}Y^{\frac{2}{3}}\sum_{m\leq X}\frac{\tau^{4}(m)}{m^{2/3}}\sum_{\substack{m_{1}\leq \frac{X}{m},m_{2}\leq \frac{X}{m}\\gcd(m_{1},m_{2})=1}}\frac{\tau^{4}(m_{1})\tau^{4}(m_{2})}{m_{1}m_{2}}\sum_{n>\min(\frac{y}{m_{1}},\frac{y}{m_{2}})}\frac{\tau^{4}(n)}{n^{4/3}}
\\&
\ll X^{2}Y^{\frac{2}{3}}\sum_{m\leq X}\frac{\tau^{4}(m)}{m^{2/3}}\sum_{m_{1}\leq m_{2}\leq \frac{X}{m}}\frac{\tau^{4}(m_{1})\tau^{4}(m_{2})}{m_{1}m_{2}}
\times\Big(\frac{m_{2}}{y}\Big)^{1/3-\varepsilon}\\
&
\ll X^{2}Y^{\frac{2}{3}}{y}^{\varepsilon-\frac{1}{3}}\sum_{m\leq X}\frac{\tau^{4}(m)}{m^{2/3}}\sum_{m_{2}\leq \frac{X}{m}}\frac{\tau^{4}(m_{2})}{m_{2}^{2/3+\varepsilon}}\sum_{m_{1}\leq m_{2}}\frac{\tau^{4}(m_{1})}{m_{1}}\\
&
\ll X^{\frac{7}{3}}T^{\frac{2}{3}+\varepsilon}y^{-\frac{1}{3}}.
\end{aligned}
\end{equation}
This together with \eqref{Q1(Y)} yields
\begin{equation}\label{3-3}
\int_{T}^{2T}Q_{1}(Y)dY=c(X)\int_{T}^{2T}Y^{\frac{2}{3}}dY+O(X^{\frac{7}{3}}T^{\frac{5}{3}+\varepsilon}y^{-\frac{1}{3}}).
\end{equation}
Similar to \eqref{E(Y)}, we obtain the estimate
\begin{equation}\label{c(x)}
c(X)\ll X^{\frac{7}{3}+\varepsilon}.
\end{equation}

From \eqref{Q1+Q2+Q3}-\eqref{3-2} and \eqref{3-3}, we get
\begin{equation}\label{r1r1}
\int_{T}^{2T}\mathfrak{R}_{1}^{2}dY=c(X)\int_{T}^{2T}Y^{\frac{2}{3}}dY+O(X^{\frac{7}{3}}T^{\frac{5}{3}+\varepsilon}y^{-\frac{1}{3}}+
X^{\frac{11}{3}}T^{\frac{4}{3}+\varepsilon}y^{\frac{1}{3}}).
\end{equation}

\vskip 3mm

C. \textit{Evaluation   of  $\int_{T}^{2T}\mathfrak{R}^{2}dY$}

\vskip 3mm

From \eqref{r2r2}, \eqref{c(x)}, \eqref{r1r1} and  Cauchy's inequality, we get
\begin{equation}\label{r1r2}
\begin{aligned}
\int_{T}^{2T}\mathfrak{R}_{1}\mathfrak{R}_{2}dY\ll
X^{\frac{17}{6}}T^{\frac{5}{3}+\varepsilon}y^{-\frac{1}{6}}
+X^{\frac{7}{2}}T^{\frac{3}{2}+\varepsilon}.
\end{aligned}
\end{equation}
Combining \eqref{R_1+R_2}, \eqref{r2r2} and \eqref{r1r1}, we finally  get
\begin{equation}\label{iiiiiiiiii}
\begin{split}
\int_{T}^{2T}\mathfrak{R}^{2}dY
&=
c(X)\int_{T}^{2T}Y^{\frac{2}{3}}dY+
O(X^{\frac{11}{3}}T^{\frac{4}{3}+\varepsilon}y^{\frac{1}{3}}+X^{\frac{10}{3}}T^{\frac{5}{3}+\varepsilon}y^{-\frac{1}{3}}\\
&\qquad \qquad \qquad \qquad\qquad \qquad+X^{\frac{17}{6}}T^{\frac{5}{3}+\varepsilon}y^{-\frac{1}{6}}+X^{\frac{7}{2}}T^{\frac{3}{2}+\varepsilon}).
\end{split}
\end{equation}

By choosing a best $y\in (1,(T/X)^{1/3})$ via Lemma \ref{lemma 5}, we get
$$
\int_{T}^{2T}|\mathfrak{R}_{\textit{K}}(X,Y)|^{2}dY=
c(X)\int_{T}^{2T}Y^{\frac{2}{3}}dY+
O(X^{\frac{31}{9}}T^{\frac{14}{9}+\varepsilon}+X^{\frac{26}{9}}T^{\frac{29}{18}+\varepsilon})
,
$$
where $c(X)$ is defined by \eqref{cx}.
This completes the proof of Theorem 2.

\vskip 1mm

\vskip 8mm


\begin{thebibliography}{150}

\bibitem{Chan-Kumchev}
T. H. Chan and A. V. Kumchev,
\emph{On sums of Ramanujan sums},
Acta arithm. {\bf 152} (2012), 1-10.



\bibitem{Chan-Nara}
K. Chandrasekharan and R. Narasimhan,
\emph{The approximate functional equation for a class of zeta-functions},
Math. Ann. {\bf 152} (1963), 30-64.


\bibitem{Zhai-Cao}
 X. Cao, Y. Tanigawa and W. Zhai,
\emph{Tong-type identity and the mean square of the error term for an extended Selberg class},
Sci. China.  Math. {\bf 59} (2016), 2103-2144.


\bibitem{Friedlander-Iwaniec}
J. B. Friedlander and H. Iwaniec,
\emph{Summation Formulae For Coefficients of L-functions},
Canad. J. Math. Vol. {\bf 57} (3), 2005, 494-505.


\bibitem{Fomenko}
O. M. Fomenko,
\emph{Mean values connected with the Dedekind zeta function},
J. Mathematical Sciences, Vol. 150, 2008, 2114-2122.


\bibitem{GrahamKolesnik1991}
S. W. Graham and G. Kolesnik,
\emph{Van Der Corput's Method of Exponential Sums},
Cambridge University Press,
Cambridge, 1991.

\bibitem{Ivi-book}
A.  Ivi\'{c},
\emph{The Riemann Zeta-Function},
New York: John Wiley \& Sons, 1985.


\bibitem{Ivi}
A.  Ivi\'{c},
\emph{On zeta-functions associated with Fourier coefficients of cusp forms},
In: Bombieri, E. et al. (eds.) Proceedings of the Amalfi Conference on Analytic Number Theory,
pp. 231-246. Universit\'{a} di Salerno, Salerno (1992).




\bibitem{Landau}
E. Landau,
\emph{Einf\"{u}hrung in die elementare und analytische Theorie der algebraischen Zahlen und der Ideale}.
2nd ed. New York, 1949.



\bibitem{Muller}
W. M\"{u}ller,
\emph{On the distribution of ideals in cubic number fields},
Monatsh. Math. {\bf 106} (1988), 211-219.


\bibitem{Nowak}
W. G. Nowak,
\emph{The average size of Ramannujan sums over quadratic number fields},
Arch. Math. {\bf 99} (2012), 433-442.


\bibitem{Robert and Sargos}
O. Robert and P. Sargos,
\emph{Three-dimensional exponential sums with monomials},
J. reine angew. Math. {\bf 591} (2006), 1-20.



\bibitem{Zhai}
W. Zhai,
\emph{The average size of Ramannujan sums over quadratic number fields},
to appear.



\bibitem{Zhai-2}
W. Zhai,
\emph{The average size of Ramannujan sums over quadratic number fields (\uppercase\expandafter{\romannumeral2})},
submitted.


\end{thebibliography}
\end{document}